\documentclass[12pt,reqno]{amsart}

\usepackage{etoolbox}
\makeatletter
\patchcmd{\@sect}{\@tocwrite{#1}{#8}}{\@tocwrite{#1}{#7}}{}{}
\makeatother

\usepackage[T1]{fontenc}
\usepackage{lmodern}
\usepackage[margin=1in]{geometry}
\usepackage{amsmath,amssymb,mathtools}
\usepackage{microtype}
\usepackage[colorlinks=true,linkcolor=blue,pagebackref]{hyperref}
\usepackage{enumerate}
\usepackage{needspace}

\newtheorem{theorem}{Theorem}
\newtheorem{lemma}[theorem]{Lemma}
\newtheorem{proposition}[theorem]{Proposition}
\newtheorem{corollary}[theorem]{Corollary}
\theoremstyle{definition}

\newtheorem{remark}[theorem]{Remark}

\DeclareMathOperator{\Id}{Id}
\DeclareMathOperator{\Spec}{Spec}
\DeclareMathOperator{\Prim}{Prim}
\newcommand{\rcf}{\mathrm{rcf}}
\DeclareMathOperator{\diag}{diag}
\newcommand{\reg}{\operatorname{reg}}
\newcommand{\im}{\operatorname{im}}

\newcommand{\fin}{\mathrm{fin}}
\newcommand{\N}{\mathbb N_0}

\title{A note on the ideal lattice of a weighted Leavitt path algebra}
\author{Raimund Preusser}
\address{School of Mathematics and Statistics, Nanjing University of Information Science and Technology, Nanjing, China}  
\email{raimund.preusser@gmx.de}
\date{}
\subjclass[2020]{16S88, 16D25}
\keywords{Leavitt path algebra, weighted graph, ideal lattice}

\begin{document}
\begin{abstract}
Let $K$ be a field and $L_K(E,w)$ the Leavitt path algebra of a row-finite weighted graph $(E,w)$. Suppose that $L_K(E,w)$ is not isomorphic to an ordinary Leavitt path algebra. We prove that the ideal lattice of every countably generated unital $K$-algebra embeds as a closed interval into the ideal lattice of $L_K(E,w)$. The embedding preserves multiplication of ideals modulo the lower endpoint of the interval. It follows that the ideal lattice of $L_K(E,w)$ is not distributive and that ideal multiplication in $L_K(E,w)$ is not commutative. Moreover, we show that if $(E,w)$ is countable, then $L_K(E,w)$ has exactly $|K|^{\aleph_0}$ ideals, prime ideals, left primitive ideals, and right primitive ideals.
\end{abstract}

\maketitle
\tableofcontents

\section{Introduction}
The ideal structure of ordinary Leavitt path algebras (of directed graphs) is well understood \cite{AAM,Rangaswamy}. In particular, their two-sided ideal lattices are distributive and their ideal multiplication is commutative \cite{Rangaswamy}. For weighted Leavitt path algebras, the ideal structure is much less understood. The problem of describing their ideals was highlighted in the open problems section of the survey \cite{Survey}. 

Several contributions address aspects of this problem. Ara \cite[Section~5]{Ara} studies ideals through connections with Leavitt path algebras of separated graphs, describing idempotent-generated ideals in certain quotients of weighted Leavitt path algebras. Hazrat and Nam \cite{HazratNam} describe the lattice of vertex-generated ideals for weighted Leavitt path algebras associated with sandpile graphs equipped with their balanced weighting. More recently, Damalerio, Hazrat and Nam \cite[Theorem~3.9]{DamalerioHazratNam} show that for Leavitt path algebras of vertex-weighted graphs, an order-preserving $\mathbb Z[x,x^{-1}]$-module isomorphism of graded Grothendieck groups induces an isomorphism of the join-semilattices of vertex-generated ideals.

Our main result shows that if a weighted Leavitt path algebra over a field is not isomorphic to an ordinary Leavitt path algebra, then its ideal lattice accommodates the ideal lattice of every countably generated unital algebra over the same field. More precisely, let $L=L_K(E,w)$ be a weighted Leavitt path algebra, where $K$ is a field and $(E,w)$ is row-finite. If $L$ is not isomorphic to an ordinary Leavitt path algebra, then for every countably generated unital $K$-algebra $B$ there are ideals $Y\subseteq Z$ of $L$ and a lattice isomorphism
\begin{equation}\label{eq:main}
 \Id(B)\cong[Y,Z]\subseteq\Id(L).
\end{equation}
The isomorphism preserves ideal multiplication modulo $Y$. It follows that, in contrast to the unweighted case, the ideal lattice of $L_K(E,w)$ is not distributive and that ideal multiplication in $L_K(E,w)$ is not commutative. We also obtain an exact count of ideals and of prime and primitive ideals when the weighted graph $(E,w)$ is countable. 

We prove \eqref{eq:main} by constructing a representation $L\rightarrow\mathbb M^{\rcf}_{\Lambda}(B)$ such that the image of $L$ contains $\mathbb M^{\fin}_{\Lambda}(B)$. Here $\Lambda$ is a nonempty set, $\mathbb M^{\rcf}_{\Lambda}(B)$ is the set of all row- and column-finite matrices over $B$ whose rows and columns are indexed by $\Lambda$, and $\mathbb M^{\fin}_{\Lambda}(B)$ is the set of all matrices in $\mathbb M^{\rcf}_{\Lambda}(B)$ with finitely many nonzero entries. The existence of such a representation implies \eqref{eq:main}, see Lemma \ref{prop:matrix-transfer}.

In Section 2, we recall some basic definitions regarding lattices, weighted graphs and weighted Leavitt path algebras. Section 3 contains some results on row- and column-finite matrix algebras. In Section 4, we show how to use branching systems to obtain representations $L\rightarrow\mathbb M^{\rcf}_{X}(A)$ where $A$ is any unital $K$-algebra. In Section 5, we construct, under the assumption that $L$ is not isomorphic to an ordinary Leavitt path algebra and that the algebra $A$ is generated by two elements $a,b$ such that $b^4=1$, a representation $\rho:L\rightarrow\mathbb M^{\rcf}_{X}(A)$ such that $\mathbb M^{\fin}_{\Omega}(A)\lhd \rho(L)$ where $\Omega\subseteq X$ is some nonempty subset. In Section 6, we prove our main result Theorem \ref{thm:main}. In Section 7, we obtain the cardinalities of various sets of ideals in $L$, under the assumption that $L$ is not isomorphic to an ordinary Leavitt path algebra and that the weighted graph $(E,w)$ is countable. 

\section{Preliminaries}
Throughout the paper $K$ denotes a fixed field. All algebras are associative (but not necessarily unital) $K$-algebras. Modules over algebras with local units are understood to be unital. All ideals are two-sided, and products of ideals consist of finite sums of products.
\subsection{Lattices}
A \emph{lattice} is a partially ordered set in which every pair $x,y$ has a join (i.e. a least upper bound) $x\vee y$ and a meet (i.e. a greatest lower bound) $x\wedge y$. It is \emph{distributive} if
\[
 x\wedge(y\vee z)=(x\wedge y)\vee(x\wedge z)
\]
for all $x,y,z$ (or equivalently, if the dual distributive identity holds). A map $f$ between two lattices is called a \emph{lattice isomorphism} if it is bijective and $x\leq y\Leftrightarrow f(x)\leq f(y)$. Note that any lattice isomorphism preserves joins and meets.

For $a\leq b$, the \emph{interval} $[a,b]$ consists of all $x$ with $a\leq x\leq b$. An interval in a lattice is itself a lattice with the induced order. Write $\Id(A)$ for the lattice of two-sided ideals of an algebra $A$, ordered by inclusion, with joins given by sums and meets by intersections. In particular, an interval $[Y,Z]\subseteq \Id(A)$ consists of the ideals $I$ satisfying $Y\subseteq I\subseteq Z$. 

\subsection{Weighted graphs and Condition (LPA)}
A \emph{weighted graph} is a pair $(E,w)$, where $E=(E^0,E^1,s,r)$ is a directed graph with vertex set $E^0$, edge set $E^1$, and source and range maps $s,r:E^1\to E^0$, and $w:E^1\to\{1,2,\ldots\}$ is a weight map. A weighted graph is called \textit{row-finite} if no vertex emits infinitely many edges. 

We call an edge $e$ \emph{heavy} if $w(e)>1$. For a vertex $v$, we write $T(v)$ for the vertices reachable from $v$ by a (directed) path (here vertices are considered as paths of length $0$, so $v\in T(v)$). Two edges $e,f$ are \emph{in line} if $e=f$, or $s(f)\in T(r(e))$, or $s(e)\in T(r(f))$. A \textit{cycle} based at a vertex $v$ is a closed path $e^{(1)}\dots e^{(n)}$ of length $n\geq 1$ such that $s(e^{(1)})=v$ and $s(e^{(i)})\neq s(e^{(j)})$ if $i\neq j$. 

A weighted graph satisfies \textit{Condition (LPA)} if the conditions (LPA1)-(LPA4) below hold (cf. \cite[Introduction]{Preusser}).
\begin{enumerate}
\medskip
\item[(LPA1)] No vertex emits more than one heavy edge.
\medskip
\item[(LPA2)] No vertex in $\bigcup_{e\text{ heavy}}T(r(e))$ emits more than one edge.
\medskip
\item[(LPA3)] If heavy edges $e,f$ are not in line, then $T(r(e))\cap T(r(f))=\emptyset$.
\medskip
\item[(LPA4)] If $e$ is heavy and a cycle is based at a vertex of $T(r(e))$, then that cycle contains $e$.
\medskip
\end{enumerate}

\subsection{Weighted Leavitt path algebras}
Let $(E,w)$ be a row-finite weighted graph, and let $E^{0}_{\reg}$ denote the set of all vertices that emit at least one edge. Set $w(v)=\max\{w(e)\mid e\in s^{-1}(v)\}$ for every $v\in E^0_{\reg}$. The $K$-algebra $L_K(E,w)$ presented by the generating set 
\[\{v,e_i,e_i^*\mid v\in E^0, e\in E^1, 1\leq i\leq w(e)\}\]
and the relations
\begin{enumerate}[(W1)]
\medskip
\item $uv=\delta_{uv}u\quad(u,v\in E^0)$,
\medskip
\item $s(e)e_i=e_i=e_ir(e),~r(e)e_i^*=e_i^*=e_i^*s(e)\quad(e\in E^1, 1\leq i\leq w(e))$,
\medskip
\item 
$\sum\limits_{i=1}^{w(v)}e_i^*f_i= \delta_{ef}r(e)\quad(v\in E_{\reg}^0;~e,f\in s^{-1}(v))$,
\medskip 
\item $\sum\limits_{e\in s^{-1}(v)}e_ie_j^*= \delta_{ij}v\quad(v\in E_{\reg}^0;~1\leq i, j\leq w(v))$
\medskip
\end{enumerate}
is called the {\it weighted Leavitt path algebra} of $(E,w)$. In relations (W3) and (W4) we set $e_i$ and $e_i^*$ equal to zero whenever $i > w(e)$. Finite sums of distinct vertices form a set of local units for $L_K(E,w)$. If all edge weights are $1$, then $L_K(E,w)$ is isomorphic to the ordinary Leavitt path algebra $L_K(E)$.

By \cite[Theorems 1 and 2]{Preusser}, a weighted Leavitt path algebra $L_K(E,w)$ of a row-finite weighted graph $(E,w)$ is ring isomorphic to an ordinary Leavitt path algebra with coefficients in a field if and only if $(E,w)$ satisfies Condition (LPA).

\section{Some results on row- and column-finite matrix algebras}
For a set $X$ and a unital algebra $A$, let $\mathbb M^{\rcf}_{X}(A)$ denote the algebra of all matrices $(a_{xy})_{x,y\in X}$ over $A$ such that every row and every column has only finitely many nonzero entries. Addition and multiplication are the usual matrix operations, the identity is the matrix $I_X=(\delta_{xy}1_A)_{x,y\in X}$. Let $\mathbb M^{\fin}_{X}(A)$ denote the two-sided ideal of $\mathbb M^{\rcf}_{X}(A)$ consisting of all matrices with only finitely many nonzero entries in total. Write $E_{xy}(a)$ for the matrix with entry $a$ at $(x,y)$ and zeros elsewhere, and abbreviate $E_{xy}(1)$ to $E_{xy}$. 

A \emph{partial bijection} of $X$ is a bijection $f:Q\to R$ between subsets $Q,R\subseteq X$. Its \emph{partial permutation matrix} $P_f\in \mathbb M^{\rcf}_{X}(A)$ has entry $1_A$ at position $(x,f(x))$ for each $x\in Q$, and zeros elsewhere. Clearly $P_{f^{-1}}=P_f^T$. Moreover, if $f:Q\to R$ and $g:U\to V$ are partial bijections of $X$, and $f\circ g$ denotes their composition with domain $\{u\in U\mid g(u)\in Q\}$, then $P_{f\circ g}=P_gP_f$.

For $\Omega\subseteq X$, matrices indexed by $\Omega$ are viewed as matrices indexed by $X$ by extension by zero outside $\Omega\times\Omega$.

\begin{lemma}\label{lem:uniform-coefficients}
Let $B$ be a nonzero countably generated unital $K$-algebra. Then there are
$a,b\in\mathbb M^{\rcf}_{\N}(B)$ with $b^4=1$ such that the subalgebra of $\mathbb M^{\rcf}_{\N}(B)$ generated by $a$ and $b$ contains $ \mathbb M^{\fin}_{\N}(B)$.
\end{lemma}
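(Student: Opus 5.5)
We construct $a$ and $b$ explicitly so that a few words in them produce the matrix units $E_{xy}$ together with the scalar matrices $E_{00}(\beta_k)$, where $\beta_1,\beta_2,\dots$ is a countable generating set of $B$ (repeating generators if $B$ is finitely generated). This suffices. Indeed, $\mathbb M^{\fin}_{\N}(B)$ is spanned by the matrices $E_{xy}(\beta)$ with $\beta$ a monomial in the $\beta_k$, and
\[
E_{xy}(\beta_{k_1}\cdots\beta_{k_m})=E_{x0}E_{00}(\beta_{k_1})\cdots E_{00}(\beta_{k_m})E_{0y}.
\]

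Take $b$ to be a permutation matrix $b=P_\sigma$, where $\sigma$ is a bijection of $\N$ with $\sigma^4=\mathrm{id}$; then $b^4=1$ automatically. We need a partial permutation in the algebra that produces a ``shift''. For this, split $\N$ into four infinite blocks $N_0,N_1,N_2,N_3$ (say by residue mod $4$). Let $\sigma$ cycle them, sending $4n+j\mapsto 4n+j+1$ for $j<3$ and $4n+3\mapsto 4n$.

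Let $a$ be a sum of two kinds of pieces. The first is a shift-type partial permutation matrix inside one block, namely $P_f$ with $f(4n)=4n+4$ on $N_0$. The second is a diagonal part on another block, $D=\sum_{k\ge1} E_{4k+1,4k+1}(\beta_k)$ supported on $N_1$. Then $a$ is row- and column-finite. Conjugating $a$ by powers of $b$ and multiplying by the diagonal idempotents lets us separate the pieces. To obtain those idempotents, first note that $e_j=$ the identity on $N_j$ should be extractable. We can arrange this by adding a third piece to $a$ that makes some polynomial expression isolate a block projection, for instance a component on $N_2$ equal to the identity on $N_2$. Then the task is to obtain $e_2$ from $a$ and $b$. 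A cleaner option is to make the $N_0$ shift part $S=P_f$ and write $T=S^T$. The matrix $T$ is also obtainable, via $b$-conjugates of a transposed copy placed on $N_3$.

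Granting block projections, put $S$ on $N_0$ and $T$ on $N_3$, conjugated into $N_0$ by $b$. Then $ST$ and $TS$ are diagonal projections that differ only at the index $0$. Hence $1_{N_0}-TS$ (or $1_{N_0}-ST$, depending on orientation) equals $E_{00}$. This is the key finite-rank element. From it we get $E_{4m,4n}=T^mE_{00}S^n$ (up to orientation), and $b$-powers move these units to all blocks. Hence all $E_{xy}$ lie in the subalgebra. Finally,
\[
E_{0,4k+1}\,D\,E_{4k+1,0}=E_{00}(\beta_k),
\]
which gives the scalar entries.

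The main obstacle is extracting the block projections $e_j$ from only two generators without circularity. My first attempt uses a spectral trick. Let $a$ act on $N_2$ as the scalar $\lambda\cdot 1$ and be nilpotent-free elsewhere. This fails for the shift parts, so instead I would use the degree structure: the shift parts have ``off-diagonal'' support. The most robust fix is to make $a$ itself contain the pieces $e_0 S e_0$ and $e_3 T' e_3$ and a diagonal piece, all arranged so that $a$ restricted to different blocks is separated by products with $b$. For example, $a\,b\,a\,b^{-1}$ kills cross terms because the supports are incompatible. I would verify by a direct product computation that suitable words (such as $a b^{j} a b^{-j}$) isolate each piece. If this gets messy, the fallback is to enlarge the period of $\sigma$'s block structure within order $4$ by using a $4$-cycle on blocks together with fixed points. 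The fixed points give extra room to place an idempotent piece of $a$ that commutes with $b$ in a controlled way.
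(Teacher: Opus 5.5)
There is a genuine gap, and it sits at the step that carries the whole lemma. Your opening reduction is correct: once all $E_{xy}$ and all $E_{00}(\beta_k)$ lie in the subalgebra, so does $\mathbb M^{\fin}_{\N}(B)$. Also, once $S$ and $T$ are available \emph{separately}, $ST-TS=E_{00}$, so you do not even need $1_{N_0}$.

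What you never supply is a way to produce, from $a$ and $b$, some nonzero element supported in a single block, such as a block projection or $S$ and $T$ in isolation. You name this as the main obstacle but leave it open. The spectral idea is abandoned, the suggested word is not verified, and the fallback is not a construction.

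Moreover, the mechanism you suggest does not work. In block form, $b=P_\sigma$ gives $bab^{-1}=\diag(a_1,a_2,a_3,a_0)$ for $a=\diag(a_0,a_1,a_2,a_3)$. Hence every word in $a,b$ is a sum of terms $\diag\bigl(p(a_0,a_1,a_2,a_3),p(a_1,a_2,a_3,a_0),p(a_2,a_3,a_0,a_1),p(a_3,a_0,a_1,a_2)\bigr)b^k$ with $p$ a noncommutative polynomial. Block-diagonal matrices multiply blockwise, so nothing is killed by ``incompatible supports''. With the pieces $(S,D,1_{N_2},T)$ you describe, $a\,bab^{-1}=\diag(SD,D,T,TS)$, and nothing is isolated. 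Likewise, conjugating $T$ from $N_3$ into $N_0$ by $b$ drags every other piece along. A block-diagonal element of the subalgebra must come from the $k=0$ summand. So you need a $p$ that vanishes on three cyclic shifts of the tuple of pieces but not on the fourth. Whether one exists depends entirely on how the pieces are chosen, and that is exactly the content of the lemma.

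The paper supplies the missing idea by making a power of $a$ collapse to a block idempotent. Let $R$ be generated by the shifts $U,V$ and $\Delta=\diag(b_0,b_1,\ldots)$. In $\mathbb M_4(R)$ the paper takes $b=\varepsilon_{21}+\varepsilon_{32}+\varepsilon_{43}+\varepsilon_{14}$ and $a=\varepsilon_{11}+U\varepsilon_{23}+V\varepsilon_{24}+\Delta\varepsilon_{34}$. The data sit in strictly upper triangular block positions among blocks $2,3,4$, orthogonal to $\varepsilon_{11}$, so $a^3=\varepsilon_{11}$. Then $\varepsilon_{ij}=b^{i-1}a^3b^{5-j}$ gives all block matrix units, and compressing $a$ by them recovers $U,V,\Delta$. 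Finally $U^iE_{00}V^j=E_{ij}$ and $E_{00}V^i\Delta U^iE_{00}=b_iE_{00}$ finish the proof.

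Your block-diagonal framework can also be repaired, but only with deliberately chosen zero pieces and a verified word. For instance, take pieces $(S,T,0,D)$ on $(N_0,N_1,N_2,N_3)$, where $ST=1$, $TS=1-E_{00}$ on $N_0$, and $D$ is diagonal carrying the generators. One checks that $(a\,bab^{-1}-bab^{-1}a)(1-b^2ab^{-1}ab^{-1})=E_{00}$. After that, $(bab^{-1})^nE_{00}a^m$ gives the matrix units in $N_0$, powers of $b$ move them to all blocks, and compressing $a$ extracts the generators. Your proposal contains neither such a choice of pieces nor such a verification.
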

\begin{proof}
Choose a sequence of generators $b_0,b_1,\ldots$ of $B$.
Let $U,V\in\mathbb M^{\rcf}_{\N}(B)$ be the matrices with entries $U_{i+1,i}=V_{i,i+1}=1$ for all $i\geq 0$ and all other entries zero. Then $VU=1$ and $UV=1-E_{00}$. Set $\Delta=\diag(b_0,b_1,\ldots)\in\mathbb M^{\rcf}_{\N}(B)$ and let $R$ be the subalgebra of $\mathbb M^{\rcf}_{\N}(B)$ generated by $U$, $V$ and $\Delta$.
Clearly
\[
 U^iE_{00}V^j=E_{ij},\qquad E_{00}V^i\Delta U^iE_{00}=b_iE_{00}
\]
and hence $R$ contains $\mathbb M^{\fin}_{\N}(B)$.

In $\mathbb M_4(R)$, write $\varepsilon_{ij}$ for the four-by-four matrix units,
and set
\begin{align*}
 a=\varepsilon_{11}+U\varepsilon_{23}
   +V\varepsilon_{24}+\Delta\varepsilon_{34}\in \mathbb M_4(R),\qquad
  b=\varepsilon_{21}+\varepsilon_{32}+\varepsilon_{43}+\varepsilon_{14}\in \mathbb M_4(R).
\end{align*}
Then $a^3=\varepsilon_{11}$ and $b^4=1$. Moreover, $\varepsilon_{ij}=b^{i-1}a^3b^{5-j}$ for all $1\leq i,j\leq 4$. Multiplying $a$ on both sides by the matrix units recovers $U,V,\Delta$
in every matrix position. Thus $a$ and $b$ generate the algebra $\mathbb M_4(R)$.

Let 
\[\alpha:\mathbb M_4(\mathbb M^{\rcf}_{\N}(B))\to \mathbb M^{\rcf}_{\{1,2,3,4\}\times\N}(B)\]
be the algebra isomorphism defined by $\alpha(Z)_{(i,m),(j,n)}=(Z_{ij})_{mn}$. Since $R$ contains $\mathbb M^{\fin}_{\N}(B)$, the algebra $\alpha(\mathbb M_4(R))$ contains $\mathbb M^{\fin}_{\{1,2,3,4\}\times\N}(B)$.
Relabelling $(r,n)$ as $4n+r-1$ finishes the proof.
\end{proof}

\begin{lemma}\label{prop:matrix-transfer}
Let $A$ be a $K$-algebra, $B$ a unital $K$-algebra, $\Lambda$ a nonempty set, and $\pi:A\to \mathbb M^{\rcf}_{\Lambda}(B)$ an algebra homomorphism whose image contains $\mathbb M^{\fin}_{\Lambda}(B)$. Set $Y=\ker\pi$ and $Z=\pi^{-1}(\mathbb M^{\fin}_{\Lambda}(B))$.
Then the map $\Phi:\Id(B)\to[Y,Z]$ defined by
\[
 \Phi(I)=\pi^{-1}(\mathbb M^{\fin}_{\Lambda}(I))
\]
is a lattice isomorphism with
$\Phi(IJ)=\Phi(I)\Phi(J)+Y$ for all $I,J\in \Id(B)$.
\end{lemma}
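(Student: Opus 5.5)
The plan is to reduce everything to the classical correspondence between ideals of $B$ and ideals of $\mathbb M^{\fin}_{\Lambda}(B)$, and then pull back along $\pi$. Write $M=\pi(A)$ and $F=\mathbb M^{\fin}_{\Lambda}(B)$, so $F\subseteq M\subseteq \mathbb M^{\rcf}_{\Lambda}(B)$. Since $F$ is an ideal of $\mathbb M^{\rcf}_{\Lambda}(B)$, it is an ideal of $M$. By the correspondence theorem for the surjection $\pi:A\to M$, the map $J\mapsto\pi^{-1}(J)$ is a lattice isomorphism from $\Id(M)$ onto $[Y,A]$. Its inverse is $I\mapsto\pi(I)$, and it satisfies $\pi^{-1}(J_1J_2)=\pi^{-1}(J_1)\pi^{-1}(J_2)+Y$, because $\pi(\pi^{-1}(J_1)\pi^{-1}(J_2))=J_1J_2$. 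Restricting to ideals of $M$ contained in $F$ gives a lattice isomorphism onto $[Y,Z]$ that is still multiplicative modulo $Y$. It therefore suffices to show that $I\mapsto \mathbb M^{\fin}_{\Lambda}(I)$ is a lattice isomorphism from $\Id(B)$ onto the set of ideals of $M$ contained in $F$, and that it satisfies $\mathbb M^{\fin}_{\Lambda}(IJ)=\mathbb M^{\fin}_{\Lambda}(I)\mathbb M^{\fin}_{\Lambda}(J)$.

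First, $\mathbb M^{\fin}_{\Lambda}(I)$ is an ideal of all of $\mathbb M^{\rcf}_{\Lambda}(B)$, hence of $M$. Indeed, an entry of $xm$ with $m$ finitely supported with entries in $I$ is a finite sum of products $x_{\lambda\mu}m_{\mu\nu}\in I$, and only finitely many rows and columns survive because $x$ is row- and column-finite. The map is clearly order-preserving, and it is injective since $I$ is recovered as the set of $(\lambda_0,\lambda_0)$-entries for a fixed $\lambda_0\in\Lambda$.

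For surjectivity, take an ideal $J$ of $M$ with $J\subseteq F$, and let $I$ be the set of all entries of all matrices in $J$. This is the key step. Every $E_{\lambda\mu}$ lies in $F\subseteq M$, and for $m\in J$ we have $E_{\lambda\lambda}\,m\,E_{\mu\nu}=E_{\lambda\nu}(m_{\lambda\mu})$. Hence $E_{\lambda\nu}(c)\in J$ whenever $c$ is an entry of some element of $J$, in every position $(\lambda,\nu)$. Using $E_{\lambda\lambda}(b)=E_{\lambda\lambda}\,\mathrm{diag}$-type products, more simply $E_{\lambda\lambda}(b)E_{\lambda\nu}(c)=E_{\lambda\nu}(bc)$ with $E_{\lambda\lambda}(b)\in F\subseteq M$, we get that $I$ is closed under multiplication by $B$ on both sides. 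Closure under addition follows from adding the corresponding matrix units in the same position, so $I$ is an ideal of $B$. Since every element of $J$ is a finite sum of its entries placed at their positions, $J=\mathbb M^{\fin}_{\Lambda}(I)$. An order-preserving bijection whose inverse is also order-preserving (clear, via entries) is a lattice isomorphism.

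Finally, for multiplicativity, $\mathbb M^{\fin}_{\Lambda}(I)\mathbb M^{\fin}_{\Lambda}(J)\subseteq\mathbb M^{\fin}_{\Lambda}(IJ)$ is immediate from the entry formula. Conversely, $E_{\lambda\mu}(\sum_k i_kj_k)=\sum_k E_{\lambda\lambda_0}(i_k)E_{\lambda_0\mu}(j_k)$, and summing over finitely many positions gives the reverse inclusion. Composing the two isomorphisms yields $\Phi$ with the stated properties. The only place that needs care is ensuring the matrix units lie in $M$ so that the ideal $J$ of $M$ is closed under the corner operations; this is exactly the hypothesis $F\subseteq\pi(A)$.
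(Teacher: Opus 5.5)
Your proof is correct and follows the same route as the paper. You identify the ideals of $\pi(A)$ contained in $\mathbb M^{\fin}_{\Lambda}(B)$ with the ideals $\mathbb M^{\fin}_{\Lambda}(I)$ using the matrix units available in $\pi(A)$, note that $\mathbb M^{\fin}_{\Lambda}(I)\mathbb M^{\fin}_{\Lambda}(J)=\mathbb M^{\fin}_{\Lambda}(IJ)$, and pull back along the surjection $A\to\pi(A)$; you just write out the details the paper calls ``clear'', namely the correspondence-theorem identity $\pi^{-1}(J_1J_2)=\pi^{-1}(J_1)\pi^{-1}(J_2)+Y$ and the corner computation $E_{\lambda\lambda}mE_{\mu\nu}=E_{\lambda\nu}(m_{\lambda\mu})$.
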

\begin{proof}
For any ideal $I\lhd B$ set $H_I=\mathbb M^{\fin}_{\Lambda}(I)$.
Clearly each $H_I$ is an ideal of $\im(\pi)$. Conversely, every ideal of $\im(\pi)$ contained in $H_B$ is an ideal of $H_B$, so matrix units identify it uniquely as some $H_I$.
It follows that the map $\Id(B)\to [0,H_B]_{\Id(\im(\pi))},~I\mapsto H_I$ is an order isomorphism. Clearly $H_IH_J=H_{IJ}$. Pullback under $\pi$ yields the claimed isomorphism and product formula.
\end{proof}

\begin{lemma}\label{lem:orbit}
Let $X$ be a set, $x_0\in X$ and $A$ a unital $K$-algebra. Let $S$ be a subalgebra of $\mathbb M^{\rcf}_{X}(A)$ that is generated by $AE_{x_0x_0}$ and a family $\mathcal G$ of partial permutation matrices closed under transpose. Let $\Omega$ be the set of all $x\in X$ for which there exist $n\geq 0$ and $G_1,\ldots,G_n\in\mathcal G$ such that $(G_1\cdots G_n)_{x_0,x}=1_A$, where the product for $n=0$ is the identity matrix. Then
\[
 SE_{x_0x_0}S=\mathbb M^{\fin}_{\Omega}(A).
\]
\end{lemma}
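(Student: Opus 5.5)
We prove the two inclusions separately, using the rules for partial permutation matrices set up above. Let $\mathcal W$ denote the set of finite products $G_1\cdots G_n$ of elements of $\mathcal G$, including the identity for $n=0$. Each element of $\mathcal W$ is a partial permutation matrix $P_f$, by the composition rule $P_{f\circ g}=P_gP_f$. The set $\mathcal W$ is closed under transpose, since $\mathcal G$ is and $(G_1\cdots G_n)^T=G_n^T\cdots G_1^T$.

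For a partial permutation matrix $W=P_f$ we have $E_{x_0x_0}W=E_{x_0,f(x_0)}$ if $x_0$ lies in the domain of $f$, and $E_{x_0x_0}W=0$ otherwise. Hence $\Omega$ is exactly the set of $x$ with $E_{x_0x_0}W=E_{x_0x}$ for some $W\in\mathcal W$. By transposing, $W^TE_{x_0x_0}=E_{xx_0}$ for the same $W$.

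\textbf{Step 1: $\mathbb M^{\fin}_{\Omega}(A)\subseteq SE_{x_0x_0}S$.} Take $x,y\in\Omega$ and $c\in A$. Choose $W,W'\in\mathcal W$ with $E_{x_0x_0}W=E_{x_0x}$ and $E_{x_0x_0}W'=E_{x_0y}$. Then
\[
E_{xy}(c)=W^T E_{x_0x_0}\,E_{x_0x_0}(c)\,E_{x_0x_0}W'.
\]
The element $E_{x_0x_0}(c)=cE_{x_0x_0}$ lies in $S$, and $W^T,W'$ are products of generators. So this product lies in $SE_{x_0x_0}S$. One should read $S$ as possibly non-unital, so that $W$ need not lie in $S$ when $n=0$. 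This causes no trouble, because we can absorb the identity into the adjacent factor $E_{x_0x_0}=1_AE_{x_0x_0}\in S$. Sums of such elements give all of $\mathbb M^{\fin}_{\Omega}(A)$.

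\textbf{Step 2: $SE_{x_0x_0}S\subseteq\mathbb M^{\fin}_{\Omega}(A)$.} It suffices to show that $sE_{x_0x_0}t$ lies in $\mathbb M^{\fin}_{\Omega}(A)$ for all $s,t$ in $S\cup\{1\}$. Every element of $S$ is a linear combination of words in the generators. So it suffices to treat a product
\[
u_1\cdots u_kE_{x_0x_0}v_1\cdots v_l
\]
with each $u_i,v_j\in\mathcal G\cup AE_{x_0x_0}$. We show by induction on $l$ that $E_{x_0x_0}v_1\cdots v_l$ is of the form $E_{x_0x}(c)$ with $x\in\Omega$, or is zero.
\begin{itemize}
\item Right multiplication by a generator $G\in\mathcal G$ sends $E_{x_0x}(c)$ either to $0$ or to $E_{x_0x'}(c)$. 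In the second case, taking $x=x_0W$, we get $x'=x_0(WG)$ with $WG\in\mathcal W$, so $x'\in\Omega$.
\item Right multiplication by $dE_{x_0x_0}$ gives $0$ unless $x=x_0$. In that case the result is $E_{x_0x_0}(cd)$, and $x_0\in\Omega$.
\end{itemize}
The symmetric argument, using closure of $\mathcal G$ under transpose, shows that $u_1\cdots u_kE_{x_0x_0}$ is $E_{yx_0}(c')$ with $y\in\Omega$, or zero. Here one applies the previous argument to the transposed word; the $A$-coefficients sit on the correct side because the generators $dE_{x_0x_0}$ only multiply entries at the position $(x_0,x_0)$.

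Finally $u_1\cdots u_kE_{x_0x_0}\cdot E_{x_0x_0}v_1\cdots v_l=E_{yx}(c'c)$ with $x,y\in\Omega$, which proves the inclusion.

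\textbf{Expected obstacle.} The main care is in bookkeeping rather than in any real difficulty. One must keep the noncommutative coefficients in the right order, and one must handle correctly the convention that $S$ need not contain $1$. We do this by writing $E_{x_0x_0}=E_{x_0x_0}E_{x_0x_0}$ and noting that $E_{x_0x_0}$ itself lies in $S$, so that $SE_{x_0x_0}S$ already contains $E_{x_0x_0}SE_{x_0x_0}$ together with the one-sided versions.
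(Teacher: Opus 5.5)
Your proof is correct. Step~1 is essentially the paper's argument for $\mathbb M^{\fin}_{\Omega}(A)\subseteq SE_{x_0x_0}S$: the paper also writes $E_{xy}(a)=P^TE_{x_0x_0}(a)Q$ and absorbs an empty product into $p=E_{x_0x_0}$.

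For the other inclusion you take a different route.
\begin{itemize}
\item The paper proves an invariant of the whole algebra. Every generator, and hence every element of $S$, has zero $(x,y)$- and $(y,x)$-entries whenever $x\in\Omega$ and $y\notin\Omega$. Then $(spt)_{xy}=s_{x,x_0}t_{x_0,y}$ vanishes off $\Omega\times\Omega$, and the support is finite because the $x_0$-column of $s$ and the $x_0$-row of $t$ are finite.
\item You expand $s$ and $t$ into words. By induction, $E_{x_0x_0}v_1\cdots v_l$ and $u_1\cdots u_kE_{x_0x_0}$ are single matrix units, in row (resp.\ column) $x_0$, with the other index in $\Omega$. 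So each term $u\,p\,v$ collapses to one matrix unit $E_{yx}(c'c)$, finiteness is automatic, and row/column-finiteness is never used.
\end{itemize}
The paper's version is shorter and gives the stronger fact that all of $S$ is block-diagonal with respect to $\Omega\sqcup(X\setminus\Omega)$. Yours is more hands-on and gives explicit control of each term.

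One small point: your appeal to ``the transposed word'' is loose over a noncommutative $A$. Transposition reverses products only as an anti-isomorphism onto matrices over $A^{\mathrm{op}}$. Since only positions matter here, this is harmless. You can also do the left-hand induction directly: $P_gE_{yx_0}(c)$ is either $0$ or $E_{g^{-1}(y),x_0}(c)$. If $W_{x_0,y}=1_A$, then $(WP_g^T)_{x_0,g^{-1}(y)}=1_A$, so $g^{-1}(y)\in\Omega$ because $P_g^T=P_{g^{-1}}\in\mathcal G$.
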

\begin{proof}
Set \(p=E_{x_0x_0}\). Let \(G\in\mathcal G\) and \(x\in\Omega\). Choose a product \(P\) of matrices in \(\mathcal G\) with \(P_{x_0,x}=1_A\). If \(G_{xy}\ne0\), then \((PG)_{x_0,y}=1_A\), so \(y\in\Omega\). Applying the same argument to \(G^T\) shows that \(G_{yx}\ne0\) also implies \(y\in\Omega\). Thus every generator of \(S\) has zero \((x,y)\)- and \((y,x)\)-entries whenever \(x\in\Omega\) and \(y\notin\Omega\). This property is preserved under sums and products, so it holds for every matrix in \(S\).

For \(s,t\in S\), we have
\[
(spt)_{xy}=s_{x,x_0}t_{x_0,y}.
\]
This entry is zero unless \(x,y\in\Omega\). Moreover, \(spt\) has only finitely many nonzero entries, since the \(x_0\)-column of \(s\) and the \(x_0\)-row of \(t\) have finite support. Hence \(SpS\subseteq\mathbb M^{\fin}_{\Omega}(A)\).

Conversely, let \(x,y\in\Omega\) and \(a\in A\). Choose products \(P,Q\) of matrices in \(\mathcal G\) such that \(P_{x_0,x}=Q_{x_0,y}=1_A\), replacing any empty product by \(p\). Then \(P^T,Q\in S\). Since \(P\) and \(Q\) are partial permutation matrices,
\[
E_{xy}(a)=P^T E_{x_0x_0}(a)Q
         =\bigl(P^T E_{x_0x_0}(a)\bigr)pQ\in SpS.
\]
Every matrix in \(\mathbb M^{\fin}_{\Omega}(A)\) is a finite sum of such matrices. Therefore \(SpS=\mathbb M^{\fin}_{\Omega}(A)\).
\end{proof}

\begin{lemma}\label{lem:finite-modifications}
Let $X$ be a set, $A$ a unital $K$-algebra and $\mathcal G\subseteq \mathbb M^{\rcf}_X(A)$ a family of partial-permutation matrices closed under transpose. Let $F\subseteq X$ be finite and $x_0\in F$. Modify some members of $\mathcal G$ by adding a matrix in $\mathbb M^{\fin}_F(A)$ or by multiplying on either side by $1+c$, where $c\in\mathbb M^{\fin}_F(A)$ and $(1+c)^{-1}=1+d$ with $d\in\mathbb M^{\fin}_F(A)$. Let $S$ be the subalgebra of $\mathbb M^{\rcf}_X(A)$ generated by the modified family. If
\[
 AE_{x_0x_0}\subseteq S,\qquad E_{xx_0},E_{x_0x}\in S\quad(x\in F),
\]
then $SE_{x_0x_0}S=\mathbb M^{\fin}_{\Omega}(A)$ for a nonempty subset $\Omega\subseteq X$.
\end{lemma}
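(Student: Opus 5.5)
Let $p=E_{x_0x_0}$ and let $\Omega$ be the set obtained from the \emph{unmodified} family $\mathcal G$ as in Lemma \ref{lem:orbit}, but starting from all points of $F$. Precisely, $\Omega$ is the set of $x\in X$ such that $(G_1\cdots G_n)_{f,x}=1_A$ for some $f\in F$, $n\ge 0$ and $G_i\in\mathcal G$. Then $x_0\in F\subseteq\Omega$, so $\Omega\neq\emptyset$. The strategy is to show both inclusions $SpS\subseteq\mathbb M^{\fin}_\Omega(A)$ and $\mathbb M^{\fin}_\Omega(A)\subseteq SpS$. The point is that the modifications only affect entries inside $F\times F$, and $F\subseteq \Omega$.

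\textbf{First inclusion.} I would show, as in the proof of Lemma \ref{lem:orbit}, that every modified generator $G'$ has zero $(x,y)$- and $(y,x)$-entries whenever $x\in\Omega$ and $y\notin\Omega$. For an unmodified $G\in\mathcal G$ this follows from the argument of Lemma \ref{lem:orbit}, using closure under transpose. A modification $G+m$ with $m\in\mathbb M^{\fin}_F(A)$ only changes entries in $F\times F\subseteq\Omega\times\Omega$. A modification $(1+c)G$ or $G(1+c)$ is a product of matrices that are each block-diagonal with respect to $X=\Omega\sqcup(X\setminus\Omega)$, since $1+c$ clearly is. This block property is preserved under sums and products, so it holds on all of $S$. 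Then $(spt)_{xy}=s_{x,x_0}t_{x_0,y}$ with $x_0\in\Omega$. This gives finite support inside $\Omega\times\Omega$, hence $SpS\subseteq\mathbb M^{\fin}_\Omega(A)$.

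\textbf{Second inclusion (main obstacle).} The difficulty is that products of modified generators are no longer partial permutation matrices, so the trick $E_{xy}(a)=P^TE_{x_0x_0}(a)Q$ of Lemma \ref{lem:orbit} does not apply directly. First, by hypothesis $E_{fx_0}(a)=E_{fx_0}\,aE_{x_0x_0}\in S$ and likewise $E_{x_0f}(a)\in S$ for $f\in F$. Hence $E_{fg}(a)=E_{fx_0}(a)\,p\,E_{x_0g}\in SpS$, so $\mathbb M^{\fin}_F(A)\subseteq SpS$. Next, call $x\in\Omega$ \emph{good} if $E_{fx}\in SpS$ and $E_{xf}\in SpS$ for some $f\in F$, with coefficients in $A$ allowed as above. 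All points of $F$ are good. Suppose $x$ is good, $G\in\mathcal G$ and $G_{xy}=1$. Let $G'$ be the modified version of $G$. I claim that $G'-G$ lies in $\mathbb M^{\fin}_F(A)+\mathbb M^{\fin}_F(A)G+G\,\mathbb M^{\fin}_F(A)+\mathbb M^{\fin}_F(A)G\,\mathbb M^{\fin}_F(A)$, and this has to be made to lie in $SpS+$ (terms supported away from row $x$ when $x\notin F$). Concretely, if $x\notin F$, then row $x$ of $G'$ equals row $x$ of $G$ or of $G(1+c)$. In the first case $E_{fx}G'=E_{fy}$, which gives $E_{fy}\in SpS$. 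In the second case I would correct using $1+d$: the matrix $E_{fx}G'$ equals $E_{fy}$ plus an element of $\mathbb M^{\fin}_F(A)\subseteq SpS$, or $E_{fx}G'(1+d)$-type expressions can be arranged. Since $E_{f?}$-terms with column in $F$ already lie in $SpS$, I again obtain $E_{fy}\in SpS$. The transpose direction uses $G^T\in\mathcal G$ to get $E_{yf}\in SpS$. If $x\in F$, then $y$ is reached directly from an element of $F$ and is handled similarly by starting from $f=x$. By induction on the length of the word, every $x\in\Omega$ is good. Finally $E_{xy}(a)=E_{xf}\,E_{fg}(a)\,E_{gy}$ lies in $SpS$ for all $x,y\in\Omega$, since $SpS$ is an ideal of $S$ containing $\mathbb M^{\fin}_F(A)$. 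Summing gives the second inclusion.

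The careful bookkeeping needed to show that the finite corrections stay inside $\mathbb M^{\fin}_F(A)$-multiples, which are already absorbed, is where I expect the real work to be.
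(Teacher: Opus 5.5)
Your $\Omega$ is the right set (it coincides with the orbit of $x_0$ under $\mathcal G\cup\{E_{xx_0},E_{x_0x}: x\in F\}$), and your first inclusion $SpS\subseteq\mathbb M^{\fin}_\Omega(A)$ is correct. The gap is in the inductive step of the second inclusion. There, your invariant that all corrections stay inside $\mathbb M^{\fin}_F(A)$ fails in a specific subcase. Take $x\in F$, $G_{xy}=1_A$ and $G'=(1+c)G$. Then
\[
E_{fx}G'=E_{fy}+E_{fx}cG,\qquad E_{fx}cG=\sum_{z\in F}E_{fz}(c_{xz})\,G .
\]
The nonzero entries of the correction sit in the columns that $G$ assigns to the points $z\in F$, and these columns need not lie in $F$. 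At that stage of the induction you have no way of knowing that these terms lie in $SpS$. The column direction fails symmetrically when $(G^T)'=G^T(1+c)$; note also that $G^T$ may be modified independently of $G$. So ``handled similarly by starting from $f=x$'' does not go through. Your suggested $E_{fx}G'(1+d)$ puts $1+d$ on the wrong side for this case. Solving the resulting linear system over $F$ correctly amounts to multiplying by $1+d$ on the left, which is exactly the missing idea.

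The missing idea is that every modification can be undone inside $S$. You already showed $\mathbb M^{\fin}_F(A)\subseteq SpS\subseteq S$, so each of
\[
G=G'-m,\qquad G=(1+d)G'=G'+dG',\qquad G=G'(1+d)=G'+G'd
\]
lies in $S$. Hence $\mathcal G\subseteq S$, and your inductive step becomes simply $E_{fx}G=E_{fy}$ and $G^TE_{xf}=E_{yf}$, with no corrections at all. Better still, $S$ is then generated by $AE_{x_0x_0}$ together with the transpose-closed partial-permutation family $\mathcal G\cup\{E_{xx_0},E_{x_0x}:x\in F\}$. Lemma~\ref{lem:orbit} then gives both inclusions at once, and this is exactly the paper's short proof.
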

\begin{proof}
The displayed matrices generate $\mathbb M^{\fin}_F(A)$, so every modification can be reversed and hence $\mathcal G\subseteq S$. Adjoin the matrices $E_{xx_0},E_{x_0x}~(x\in F)$ to $\mathcal G$, obtaining a transpose-closed family $\mathcal G'$. Then $S$ is generated by $\mathcal G'$ and $AE_{x_0x_0}$. The assertion of the lemma follows now from Lemma~\ref{lem:orbit}.
\end{proof}

\section{Representations of $L_K(E,w)$ in $\mathbb M^{\rcf}_X(A)$}
\subsection{A class of representations}
Let $(E,w)$ be a row-finite weighted graph and $A$ a unital $K$-algebra. For each vertex $v$ choose a countably infinite set $D^v$, with the sets $D^v$ pairwise disjoint, and set $X=\bigsqcup_{v\in E^0}D^v$ and $\mathcal M=\mathbb M^{\rcf}_X(A)$.

For any $v\in E^0$ let $P^v\in\mathcal M$ be the matrix with 
\[P^v_{xy}=\delta_{x,y}\delta_{x\in D^v}1_A\quad(x,y\in X).\]
Note that the $P^v$'s are pairwise orthogonal idempotents in $\mathcal M$.

For each $v\in E^0_{\reg}$ set $m_v=w(v)$ and write $s^{-1}(v)=\{e^{v,1},\dots,e^{v,n_v}\}$. Choose matrices $U^v\in\mathbb M_{m_v\times n_v}(\mathcal M)$ and $V^v\in\mathbb M_{n_v\times m_v}(\mathcal M)$ such that $U^v_{ij}=V^v_{ji}=0$ whenever $i>w(e^{v,j})$, and moreover
\begin{equation}\label{eq:defUV}
 L^vU^vQ^v=U^v,\quad Q^vV^vL^v=V^v,\quad U^vV^v=L^v,\quad V^vU^v=Q^v,
\end{equation}
where 
\begin{align*}
L^v&=\diag(P^v,\ldots,P^v)\in \mathbb M_{m_v\times m_v}(\mathcal M),\\
Q^v&=\diag(P^{r(e^{v,1})},\dots,P^{r(e^{v,n_v})})\in \mathbb M_{n_v\times n_v}(\mathcal M).
\end{align*}
It follows from the universal property of $L_K(E,w)$ that there is an algebra homomorphism $\rho:L_K(E,w)\to \mathcal M$ such that 
\[
 \rho(u)=P^u,\qquad
 \rho(e^{v,j}_i)=U^{v}_{ij},\qquad
 \rho((e^{v,j}_i)^*)=V^v_{ji}
\]
for all $u\in E^0$, $v\in E^0_{\reg}$, $1\leq j\leq n_v$ and $1\leq i\leq w(e^{v,j})$.

\subsection{Representations defined by branching systems}\label{subsec:branching-representations}
Let $D^v~(v\in E^0)$, $X$, $\mathcal M$ and $P^v~(v\in E^0)$ be as in \S 4.1. We choose the matrices $U^v, V^v~(v\in E^0_{\reg})$ as follows.

For each $v\in E^0_{\reg}$, $1\leq i\leq m_v$ and
$1\leq j\leq n_v$ choose subsets
\[
 D^v_{ij}\subseteq D^v,\qquad
 R^v_{ij}\subseteq D^{r(e^{v,j})}
\]
and a bijection
\[
 \theta^v_{ij}:D^v_{ij}\longrightarrow R^v_{ij}
\]
such that
\begin{equation}\label{eq:branching-partitions}
 \begin{aligned}
 D^v&=\bigsqcup_{j=1}^{n_v}
       D^v_{ij} &&\text{ for each }1\leq i\leq m_v,\\
 D^{r(e^{v,j})}&=\bigsqcup_{i=1}^{m_v}
       R^v_{ij} &&\text{ for each }1\leq j\leq n_v,\\
       D^v_{ij}&=R^v_{ij}=\emptyset&&\text{ if } i>w(e^{v,j}).
 \end{aligned}
\end{equation}
Together with $X$ and the vertex sets $D^v$, the sets $D^v_{ij}$ and
bijections $\theta^v_{ij}$ form an $(E,w)$-branching system in the sense of
\cite[Definition~11.3.1]{Survey}. Our $D^v,D^v_{ij},R^v_{ij},\theta^v_{ij}$
correspond to the survey's $D_v,R_{e_i},D_{e_i},g_{e_i}$, respectively,
where $e=e^{v,j}$.

Using the convention for partial permutation matrices from Section~3,
set
\[
 U^v_{ij}=P_{\theta^v_{ij}},\qquad
 V^v_{ji}=P_{(\theta^v_{ij})^{-1}}.
\]
Explicitly,
\[
 (U^v_{ij})_{xy}=(V^v_{ji})_{yx}=
 \begin{cases}
  1_A,&x\in D^v_{ij}\text{ and }\theta^v_{ij}(x)=y,\\
  0,&\text{otherwise}
 \end{cases}
 \qquad(x,y\in X).
\]
We leave it to the reader to check that the relations \eqref{eq:defUV} hold, and that $U^v_{ij}=V^v_{ji}=0$ whenever $i>w(e^{v,j})$. By \S 4.1, the matrices $P^v~(v\in E^0)$ and $U^v, V^v~(v\in E^0_{\reg})$ define a representation $\rho:L_K(E,w)\to \mathcal M$.

\medskip
\noindent\emph{Branching system with prescribed pairs.}
In Section~5 we will prescribe finitely many values
$\theta^v_{ij}(x)=y$ at each relevant vertex $v$. Every prescription must satisfy
$i\leq w(e^{v,j})$, $x\in D^v$ and $y\in D^{r(e^{v,j})}$.
Moreover, any two prescriptions $\theta^v_{ij}(x)=y$ and $\theta^v_{i'j'}(x')=y'$ at the same vertex $v$ must satisfy $(i,x)=(i',x')$ if and only if $(j,y)=(j',y')$. If these two conditions are satisfied, we say that the prescriptions are \textit{compatible}. We show that every family of compatible prescriptions that is finite at each vertex extends to a branching
system.

Fix $v\in E^0_{\reg}$. Let
$\theta^{v,0}_{ij}:D^{v,0}_{ij}\to R^{v,0}_{ij}$ be the finite
partial bijections prescribed at $v$, taking a map to be empty if
no value is prescribed. By compatibility, their domains are pairwise disjoint for each fixed $i$ as $j$ varies, and their ranges are pairwise disjoint for each fixed $j$ as $i$ varies. Set
\[
 \mathcal D_i=D^v\setminus\bigcup_{j=1}^{n_v}D^{v,0}_{ij},
 \qquad
 \mathcal R_j=D^{r(e^{v,j})}\setminus
                  \bigcup_{i=1}^{m_v}R^{v,0}_{ij}.
\]
All these sets are countably infinite. Choose $j_0$ with
$w(e^{v,j_0})=m_v$ and partition $\mathcal D_1$ into countably
infinite sets $T_1,\ldots,T_{n_v}$. For each $j\ne j_0$, extend
$\theta^{v,0}_{1j}$ by a bijection $T_j\to\mathcal R_j$.
Next partition $\mathcal R_{j_0}$ into countably infinite sets
$Z_1,\ldots,Z_{m_v}$. Extend $\theta^{v,0}_{1j_0}$ by a bijection
$T_{j_0}\to Z_1$, and for $2\leq i\leq m_v$ extend
$\theta^{v,0}_{ij_0}$ by a bijection $\mathcal D_i\to Z_i$.
Leave every other prescribed partial bijection unchanged. Taking the
domains and ranges of the resulting maps as $D^v_{ij}$ and
$R^v_{ij}$ yields \eqref{eq:branching-partitions}.
This construction adds no pairs with $i\geq2$ and $j\ne j_0$.

Performing the construction in the previous paragraph at every regular vertex yields a branching system satisfying all prescribed values. In particular, taking no prescribed values at any vertex proves that a branching system exists.

\section{Representations arising from failure of Condition (LPA)}
\begin{proposition}\label{prop:matrix-representation}
Let $(E,w)$ be a row-finite weighted graph that fails Condition (LPA), and set $L=L_K(E,w)$. Let $A$ be a nonzero unital $K$-algebra generated by elements $a,b$ with $b^4=1$. Then there exist a set $X$, a representation
\[
 \rho:L\longrightarrow\mathbb M^{\rcf}_X(A),
\]
and a nonempty subset $\Omega\subseteq X$ such that $\mathbb M^{\fin}_{\Omega}(A)$ is an ideal of $\rho(L)$.
\end{proposition}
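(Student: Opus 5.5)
The plan is to build $\rho$ from an $(E,w)$-branching system as in \S\ref{subsec:branching-representations}, and then to perturb finitely many of the generating partial permutation matrices so that Lemma~\ref{lem:finite-modifications} applies. In that lemma, $\mathcal G$ is the transpose-closed family $\{P^u\}\cup\{U^v_{ij},V^v_{ji}\}$; note that $P^u=P_{\mathrm{id}_{D^u}}$ is itself a partial permutation matrix. The target is $S=\rho(L)$: we want $\rho(L)E_{x_0x_0}\rho(L)=\mathbb M^{\fin}_\Omega(A)$, and this is an ideal of $\rho(L)$. So we must find a finite set $F\ni x_0$ and modifications by matrices supported on $F$ such that three things hold. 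First, the modified matrices still satisfy \eqref{eq:defUV}, so that they still define a representation by \S4.1. Second, $\rho(L)$ contains $AE_{x_0x_0}$. Third, $\rho(L)$ contains $E_{xx_0}$ and $E_{x_0x}$ for every $x\in F$.

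The natural way to preserve \eqref{eq:defUV} is to conjugate. Replace $U^v$ by $U^vW$ and $V^v$ by $W^{-1}V^v$, where $W=1+c$ is an invertible matrix in $\mathbb M_{n_v}(\mathcal M)$ that commutes with $Q^v$ and differs from the identity only in finitely many entries. Then $U^vV^v$ and $V^vU^v$ are unchanged. For each $W_{jj'}$ block to be nonzero we need $r(e^{v,j})=r(e^{v,j'})$, or else we conjugate on the $L^v$ side with an $m_v\times m_v$ matrix. We also need the zero pattern $U^v_{ij}=0$ for $i>w(e^{v,j})$ to survive. This is exactly why failure of (LPA) is needed: in an ordinary Leavitt path algebra all the weights are $1$ and no such room exists.

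I will split into cases according to which of (LPA1)--(LPA4) fails. In each case I locate a finite configuration. If (LPA1) fails, there is a vertex $v$ with two heavy edges $e,f$. In that case the entries $e_1,e_2,f_1,f_2$ of $U^v$ give two distinct row indices $i=1,2$ in two columns, and the $2\times2$ block $\begin{pmatrix}U_{1e}&U_{1f}\\U_{2e}&U_{2f}\end{pmatrix}$ can be twisted. Using the prescribed-pairs construction at the end of \S4, I prescribe finitely many values of $\theta^v_{ij}$ so that suitable words in the $e_i,e_i^*$ return to a chosen point $x_0\in D^v$ along different routes. The modification on $F$ then inserts $a$ as a coefficient, in the form $1+E_{xy}(a)$-type factors or added finitely supported matrices, and inserts the order-$4$ element $b$ as an invertible twist, using $b^4=1$ so that the inverse is $b^3$ and is again of the allowed form $1+d$. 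The effect is that some element of $L$ maps to $E_{x_0x_0}(a)$ and another to $E_{x_0x_0}(b)$. Since $a,b$ generate $A$, this gives $AE_{x_0x_0}\subseteq\rho(L)$. Route words whose $x_0$-rows and columns are single points yield $E_{xx_0}$ and $E_{x_0x}$. The failures of (LPA2)--(LPA4) are handled similarly. For (LPA2), a vertex after a heavy edge emits two edges, giving a second branch reachable along different indices $i$. For (LPA3), two heavy edges not in line have a common descendant. For (LPA4), a cycle avoids the heavy edge $e$. In each case we get two distinct paths with different weight labels into the same region, and that is the freedom the twist uses.

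The main obstacle is to find, in each case, the explicit finite configuration of prescriptions and the modification. This requires checking that the isolation works, namely that the words produce exactly $E_{x_0x_0}(a)$ and $E_{x_0x_0}(b)$ rather than sums with other entries. It also requires checking that the modification respects \eqref{eq:defUV}. I would first settle case (LPA1) completely. I would then try to reduce the other cases to it by passing along the relevant paths, making prescriptions along a finite path, before treating each case separately.
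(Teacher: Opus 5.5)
Your framework matches the paper's: a branching-system representation with finitely many prescribed values, a finitely supported invertible twist preserving \eqref{eq:defUV}, then Lemma~\ref{lem:finite-modifications}, with a case split over (LPA1)--(LPA4). Your Case~1 idea of twisting rows $1,2$ at one point from the $L^v$ side is also the paper's. However, the proposal stops where the proof begins. The twisted generators are still partial permutation matrices up to finitely many entries. The whole content is therefore to choose a twist and a word in $\rho(L)$ that isolates a single entry, and you never do this.

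The paper uses one block $M=\begin{pmatrix}b&b\\a&a-1\end{pmatrix}$ on two coordinates: two row indices at one point in Case~1, two points $x_1,x_2$ of $X$ in Cases~2--4. Conjugating an idempotent that is $\diag(1,0)$ there gives the block $ME_{11}M^{-1}$, whose $(1,2)$ and $(2,2)$ entries are $b$ and $a$. Cutting with matrix units yields $\xi=E_{x_2x_2}(b)$, hence $p=\xi^4$; this is where $b^4=1$ is used. Then $pRp=E_{x_2x_2}(a)$. Your ``$1+E_{xy}(a)$-type factors and a $b$-twist'' comes with no such isolation mechanism.

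Even in Case~1 you only flag the zero-pattern issue without resolving it. A row twist at $x_0$ needs $x_0$ to be sent through row $1$ into a heavy edge, and it is useful only if rows $1$ and $2$ at $x_0$ enter different heavy edges $e$ and $f$. The paper also takes the maximal-weight edge $f$ as the index $j_0$ of the extension. This keeps $\theta^u_{2j_e}$ empty, so after twisting $U^u_{2j_e}=E_{x_0z_e}(a)$ and $V^u_{j_e2}=E_{z_ex_0}$. Hence $e_1e_2^*\mapsto E_{x_0x_0}(b)$ and $e_2e_2^*\mapsto E_{x_0x_0}(a)$.

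For (LPA2)--(LPA4) you have no concrete mechanism. Under (LPA1), $e$ is the only heavy edge at $u$. So a row twist at a point of $D^u$ can only mix $e_1$ and $e_2$ inside the same edge, and the Case~1 separation is unavailable; ``passing along paths'' does not supply it.

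The missing idea uses two independent structures through the same pair $F=\{x_1,x_2\}$:
\begin{enumerate}[(i)]
\item From $e_1,e_2$ and a path of single-edge vertices, build elements of $\rho(L)$ that restrict to the matrix units $E_{ij}$ on $F$ and have no other entries in the rows and columns indexed by $F$. In Cases~3 and~4 these are $A_{ij}=G_iQB_j$.
\item Then twist by $M$ on $F$ a different structure that carries an idempotent separating $x_1$ from $x_2$, without disturbing the relevant entries of those matrix units. This structure is the edges leaving the branching vertex in Case~2, the last edge of a path from the second heavy edge in Case~3, and the last cycle edge in Case~4.
\end{enumerate}
The extraction $C_{21}RC_{22}=E_{x_2x_2}(b)$ then works because $C_{21}C_{22}=0$ kills the untwisted diagonal part of $R$.

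This also shows that your remark ``in an ordinary Leavitt path algebra all weights are $1$ and no such room exists'' misplaces the obstruction. Condition (LPA) can hold with heavy edges, so room to twist exists; what is lacking is such a second structure.

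Finally, the combinatorics that make the prescriptions compatible are absent. These are the shortest paths with distinct vertices, the case $u=v$ in Case~2, and the incoming path avoiding the cycle in Case~4.
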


\begin{proof}
Choose the sets $D^v~(v\in E^0)$ as in Section~4, and set $X=\bigsqcup_{v\in E^0}D^v$ and $\mathcal M=\mathbb M^{\rcf}_X(A)$. We consider the first of the conditions (LPA1)--(LPA4) that fails. In each case, the initial representation comes from a branching system as in \S\ref{subsec:branching-representations} (we prescribe finitely many values and then extend).

Thus the initial matrices $U^v_{ij}$ are partial permutation matrices, the $V^v_{ji}$ are their transposes, and the $P^v$ are diagonal partial permutation matrices. The family $\mathcal G$ consisting of these matrices is therefore closed under transpose. We then make modifications of the kind described in Lemma~\ref{lem:finite-modifications}, using matrices from $\mathbb M^{\fin}_F(A)$ where $F\subseteq X$ is a finite subset. Write $\rho$ for the modified representation and set $S=\rho(L)$. We show that $AE_{x_0x_0}\subseteq S$ and $E_{xx_0},E_{x_0x}\in S~(x\in F)$ for some $x_0\in F$. The assertion of the proposition then follows from Lemma~\ref{lem:finite-modifications}.

We use the same invertible two-by-two matrix in all four cases:
\begin{equation}\label{eq:heavyblock}
 M=\begin{pmatrix}b&b\\a&a-1\end{pmatrix},
 \qquad M^{-1}=\begin{pmatrix}(1-a)b^{-1}&1\\ab^{-1}&-1\end{pmatrix}.
\end{equation}
One checks easily that $MM^{-1}=M^{-1}M=I_2$ and
\[
 ME_{11}M^{-1}
 =\begin{pmatrix}b(1-a)b^{-1}&b\\a(1-a)b^{-1}&a\end{pmatrix}.
\]

\par\medskip
\Needspace{6\baselineskip}
\noindent\textbf{Case 1}\par\nopagebreak\smallskip
\noindent
Suppose (LPA1) fails. Then there is a vertex $u$ that emits two heavy edges. Choose distinct heavy edges $e,f\in s^{-1}(u)$ such that $w(f)=w(u)$ (i.e. $f$ has maximal weight among the edges emitted by $u$). Write
\[
 e=e^{u,j_e},\qquad f=e^{u,j_f}
\]
using the edge indexing of Section~4. Choose pairwise distinct
\[
 x_0\in D^u,\qquad z_e\in D^{r(e)},\qquad z_f\in D^{r(f)}.
\]

For the initial branching system, prescribe
\[
 \theta^u_{1j_e}(x_0)=z_e,\qquad
 \theta^u_{2j_f}(x_0)=z_f.
\]
Complete the branching system by the extension construction in
\S\ref{subsec:branching-representations}, choosing $j_f$ as the
maximum-weight index at $u$. Write $U^{u,0}$ and $V^{u,0}$ for the initial
matrices, and set $p=E_{x_0x_0}$. The prescribed values and the
branching partitions imply
\[
\begin{aligned}
pU^{u,0}_{1j}&=\delta_{j,j_e}E_{x_0z_e},\quad&
pU^{u,0}_{2j}&=\delta_{j,j_f}E_{x_0z_f},\\
V^{u,0}_{j1}p&=\delta_{j,j_e}E_{z_ex_0},&
V^{u,0}_{j2}p&=\delta_{j,j_f}E_{z_fx_0},
\end{aligned}
\qquad(1\leq j\leq n_u).
\]

Let $W\in\mathbb M_{m_u}(\mathcal M)$ be the identity matrix except
for its upper-left $2\times2$ block, which is
\[
 \begin{pmatrix}
  1-p+E_{x_0x_0}(b)&E_{x_0x_0}(b)\\
  E_{x_0x_0}(a)&1-p+E_{x_0x_0}(a-1)
 \end{pmatrix}.
\]
Since $M$ is invertible, $W$ is invertible. Its inverse is the
identity matrix except for its upper-left $2\times2$ block, which is
\[
 \begin{pmatrix}
  1-p+E_{x_0x_0}((1-a)b^{-1})&p\\
  E_{x_0x_0}(ab^{-1})&1-2p
 \end{pmatrix}.
\]
Replace $U^{u,0},V^{u,0}$ by
\[
 U^u=WU^{u,0},\qquad V^u=V^{u,0}W^{-1},
\]
leaving the matrices at all other vertices unchanged.
Since $W$ commutes with $L^u$, we have
\[
 U^uV^u=WL^uW^{-1}=L^u,\qquad
 V^uU^u=V^{u,0}U^{u,0}=Q^u.
\]
The support relations in \eqref{eq:defUV} are also preserved.
By the displayed identities for $pU^{u,0}_{1j}$ and
$pU^{u,0}_{2j}$, only the entries at $(x_0,z_e)$ in
$U^{u,0}_{1j_e},U^{u,0}_{2j_e}$ and at $(x_0,z_f)$ in
$U^{u,0}_{1j_f},U^{u,0}_{2j_f}$ can change. Similarly, in $V^{u,0}$ only entries at the
corresponding transposed positions can change.
It follows that $U^u_{ij}=V^u_{ji}=0$ whenever $i>w(e^{u,j})$. Hence the modified family defines a representation
$\rho:L\to\mathbb M^{\rcf}_X(A)$, whose image we denote by $S$.

Next we show that $Ap\subseteq S$. The extension construction adds
no pairs to $\theta^u_{ij_e}$ for $i\geq2$. Hence $D^u_{ij_e}=R^u_{ij_e}=\emptyset$ for
$2\leq i\leq w(e)$, and $R^u_{1j_e}=D^{r(e)}$.
It follows that, after modification,
\[
 U^u_{2j_e}=E_{x_0z_e}(a),\qquad
 V^u_{j_e2}=E_{z_ex_0}.
\]
Moreover,
\[
 (U^u_{1j_e})_{xz_e}=\delta_{x,x_0}b\quad(x\in X),
\]
and hence
\[
 U^u_{1j_e}V^u_{j_e2}=bp,\qquad
 U^u_{2j_e}V^u_{j_e2}=ap.
\]
Thus $ap,bp\in S$, and $p=(bp)^4\in S$ because $b^4=1_A$.
Since $a,b$ generate $A$, we obtain $Ap\subseteq S$.

Finally, we show that $E_{xx_0},E_{x_0x}\in S$ for all
$x\in F=\{x_0,z_e,z_f\}$. Since $b^4=1_A$,
\[
 b^{-1}p=(bp)^3\in S.
\]
The formulas defining the modification imply
\[
 \begin{aligned}
 (U^u_{1j_e})_{x_0,x}&=\delta_{x,z_e}b,&
 (U^u_{1j_f})_{x_0,x}&=\delta_{x,z_f}b,\\
 (V^u_{j_e2})_{x,x_0}&=\delta_{x,z_e}1_A,&
 (V^u_{j_f2})_{x,x_0}&=-\delta_{x,z_f}1_A
 \end{aligned}
 \quad(x\in X).
\]
Consequently,
\[
 \begin{aligned}
 E_{x_0z_e}&=(b^{-1}p)U^u_{1j_e}\in S,&
 E_{z_ex_0}&=V^u_{j_e2}p\in S,\\
 E_{x_0z_f}&=(b^{-1}p)U^u_{1j_f}\in S,&
 E_{z_fx_0}&=-V^u_{j_f2}p\in S.
 \end{aligned}
\]
Together with $p\in S$, this proves that
$E_{xx_0},E_{x_0x}\in S$ for every $x\in F$.

Each modified matrix is obtained from the corresponding initial partial permutation matrix by adding a matrix supported on \(F\times F\).
All hypotheses of Lemma~\ref{lem:finite-modifications} are therefore
satisfied. It follows that $\mathbb M^{\fin}_{\Omega}(A)\lhd S$ for a nonempty set $\Omega\subseteq X$, as required.\\
\\
{\bf A common extraction calculation}\\
Let $x_1,x_2\in X$ be distinct and set $F=\{x_1,x_2\}$.
The last three cases will use matrices $C_{12},C_{21},C_{22},R\in S$
with the following properties:
\begin{enumerate}[(i)]
\item $C_{22}$ is diagonal with entries in $\{0,1_A\}$,
$(C_{22})_{x_1,x_1}=0$, $(C_{22})_{x_2,x_2}=1_A$, and
$C_{21}C_{22}=0$.
\item For every $x\in X$,
\[
 (C_{21})_{x,x_1}=\delta_{x,x_2}1_A,\qquad
 (C_{21})_{x_2,x}=\delta_{x,x_1}1_A,\qquad
 (C_{12})_{x,x_2}=\delta_{x,x_1}1_A.
\]
\item The entries of $R$ on the ordered coordinates $(x_1,x_2)$ are
\[
 \begin{pmatrix}R_{x_1,x_1}&R_{x_1,x_2}\\
 R_{x_2,x_1}&R_{x_2,x_2}\end{pmatrix}
 =\begin{pmatrix}b(1-a)b^{-1}&b\\a(1-a)b^{-1}&a\end{pmatrix}.
\]
If at least one of $x,y$ lies outside $F$ and $x\ne y$, then $R_{xy}=0$.
\end{enumerate}
Let $R_0$ have the same diagonal entries as $R$ outside $F$ and
zero entries everywhere else. Since $C_{22}$ is diagonal with
entries $0$ or $1_A$, it commutes with $R_0$. Thus
\[
 C_{21}R_0C_{22}=C_{21}C_{22}R_0=0.
\]
Moreover,
\[
 RC_{22}=R_0C_{22}+E_{x_1x_2}(b)+E_{x_2x_2}(a).
\]
Clearly 
$C_{21}E_{x_1x_2}(b)=E_{x_2x_2}(b)$ (note that $C_{21}C_{22}=0$ implies that the $x_2$-column of $C_{21}$ is zero). Consequently,
\begin{equation}\label{eq:rank-one-extraction}
 \xi=C_{21}RC_{22}=E_{x_2x_2}(b),\qquad
 p=\xi^4=E_{x_2x_2}\in S,\qquad pRp=E_{x_2x_2}(a).
\end{equation}
Since $a,b$ generate $A$, it follows that $Ap\subseteq S$.
Finally, the entries in (ii) yield
\[
 C_{12}p=E_{x_1x_2}\in S,\qquad
 pC_{21}=E_{x_2x_1}\in S.
\]
These are the connecting matrix units needed for
Lemma~\ref{lem:finite-modifications} with $F=\{x_1,x_2\}$ and
$x_0=x_2$.
\par\medskip
\Needspace{6\baselineskip}
\noindent\textbf{Case 2}\par\nopagebreak\smallskip
\noindent
Suppose (LPA1) holds and (LPA2) fails. Then we can choose a heavy edge
$e=e^{u,j_e}$ and a vertex $v\in T(r(e))$ emitting at least two
edges, with $v$ at minimum distance from $r(e)$ among such vertices.
Let $h=k^{(1)}\cdots k^{(\ell)}$ be the shortest path from $r(e)$
to $v$. Each vertex of $h$ before $v$ emits exactly one edge, and
$h$ has no repeated vertices. Also, $u$ cannot occur before the
last vertex of $h$: its unique emitted edge would then be $e$,
forcing a repetition of $r(e)$. Thus $u$ either lies outside $h$
or equals $v$. Choose distinct edges
$f=e^{v,j_f}$ and $g=e^{v,j_g}$.

We first construct a branching system as in Section~4. Take distinct
$x_1,x_2\in D^v$ and set $F=\{x_1,x_2\}$. Write
$k^{(q)}=e^{v_q,1}$ for $1\leq q\leq \ell$, and set $v_{\ell+1}=v$.
Choose distinct coordinates
$x_1^{(q)},x_2^{(q)}\in D^{v_q}$ for $1\leq q\leq\ell$,
and set $x_i^{(\ell+1)}=x_i$ for $i=1,2$. Prescribe
\[
 \theta^{v_q}_{11}(x_i^{(q)})=x_i^{(q+1)}
 \quad(1\leq q\leq\ell,\ i=1,2).
\]
Set $y_i=x_i^{(1)}~(i=1,2)$. If $\ell=0$, set $y_i=x_i~(i=1,2)$ instead.
Choose $t\in D^u$, with $t\notin F$ if $u=v$, and prescribe
\[
 \theta^u_{1j_e}(t)=y_1,\qquad
 \theta^u_{2j_e}(t)=y_2.
\]
Finally, choose $z_f\in D^{r(f)}$ and $z_g\in D^{r(g)}$. If $f = e$, choose $z_f\neq y_1,y_2$; if $g = e$, choose $z_g\neq y_1,y_2$. Prescribe
\[
 \theta^v_{1j_g}(x_1)=z_g,\qquad
 \theta^v_{1j_f}(x_2)=z_f.
\]
By \S\ref{subsec:branching-representations} we can extend these prescriptions to a branching system.

Define the following products of the initial matrices:
\[
 H=U^{v_1}_{11}\cdots U^{v_\ell}_{11},\qquad
 J=V^{v_\ell}_{11}\cdots V^{v_1}_{1 1}.
\]
For $\ell=0$ take $H=J=P^v$. The matrices $H,J$ are partial
permutation matrices which are transposes of one another, and
\[
 H_{y_i,x_i}=J_{x_i,y_i}=1_A\quad(i=1,2).
\]
Since each source along $h$ emits only one edge, relations
\eqref{eq:defUV} yield $HJ=P^{r(e)}$.
Also, the initial matrix $R^{(0)}=U^v_{1j_g}V^v_{j_g1}$ is diagonal
with entries $0$ or $1_A$, and the last two prescribed matches yield
\[
 R^{(0)}_{x_1,x_1}=1_A,\qquad R^{(0)}_{x_2,x_2}=0.
\]

Let $W\in\mathcal M$ be defined by
\[
 \begin{pmatrix}W_{x_1,x_1}&W_{x_1,x_2}\\
 W_{x_2,x_1}&W_{x_2,x_2}\end{pmatrix}=M,
 \qquad W_{xy}=\delta_{xy}1_A
 \quad\text{if }(x,y)\notin F\times F.
\]
Its inverse is defined in the same way, but using $M^{-1}$ instead of $M$.
For every $1\leq j\leq n_v$, replace
\[
 U^v_{1j}\longmapsto WU^v_{1j},\qquad
 V^v_{j1}\longmapsto V^v_{j1}W^{-1},
\]
leaving all other matrices unchanged. We leave it to the reader to check that the modified family of matrices still defines a representation $\rho:L\to \mathbb M^{\rcf}_X(A)$, whose image we denote by $S$. Below, $U$ and $V$
mean the modified matrices.

Set
\[
 C_{ij}=JV^u_{j_e i}U^u_{j j_e}H\quad(1\leq i,j\leq2),
 \qquad R=U^v_{1j_g}V^v_{j_g1}.
\]
Initially, $V^u_{j_e1}U^u_{1j_e}$ and
$V^u_{j_e2}U^u_{2j_e}$ are diagonal projections with disjoint
supports: the sets $R^u_{1j_e}$ and $R^u_{2j_e}$ in the branching system
are disjoint. These two products are unchanged by the
modification, also when $u=v$, because the additional factors in
the first product cancel. It follows that $C_{11},C_{22}$ are
still diagonal projections with disjoint supports, and the prescribed
matches yield
\[
 (C_{11})_{x_1,x_1}=(C_{22})_{x_2,x_2}=1_A,\qquad
 (C_{22})_{x_1,x_1}=0.
\]
Since $e$ is the only heavy edge emitted by $u$, the $(1,2)$-entry
of $U^uV^u=L^u$ is
$U^u_{1j_e}V^u_{j_e2}=0$. Therefore
\[
 \begin{aligned}
 C_{21}C_{22}
 &=JV^u_{j_e2}U^u_{1j_e}HJ V^u_{j_e2}U^u_{2j_e}H\\
 &=JV^u_{j_e2}(U^u_{1j_e}V^u_{j_e2})U^u_{2j_e}H=0.
 \end{aligned}
\]

Before modification, the prescribed matches yield entries $1_A$
at $(x_2,x_1)$ in $C_{21}$ and at $(x_1,x_2)$ in $C_{12}$.
Each is the only nonzero entry in its row and column, since these
products are initially partial permutation matrices. If $u\ne v$,
they do not change. Suppose $u=v$. Write $U^{u,0}$ and $V^{u,0}$ for the initial
matrices and set
\[
 A_i=U^{u,0}_{i j_e}H,\qquad
 B_i=JV^{u,0}_{j_e i}\quad(i=1,2).
\]
Then the modified products are
\[
 C_{21}=B_2WA_1,\qquad C_{12}=B_1W^{-1}A_2.
\]
For $x\in X$, let $\mathbf e_x$ denote the column vector with
entry $1_A$ at $x$ and zeros elsewhere.
The prescribed values imply $(A_i)_{t,x_i}=1_A$.
Since $A_i$ is a partial permutation matrix and $B_i=A_i^T$,
we have
\[
 A_i\mathbf e_{x_i}=\mathbf e_t,\qquad
 B_i\mathbf e_t=\mathbf e_{x_i},\qquad
 \mathbf e_{x_i}^TB_i=\mathbf e_t^T,\qquad
 \mathbf e_t^TA_i=\mathbf e_{x_i}^T
 \quad(i=1,2).
\]
Moreover, $t\notin F$ implies
\[
 W\mathbf e_t=W^{-1}\mathbf e_t=\mathbf e_t,
 \qquad \mathbf e_t^TW=\mathbf e_t^T.
\]
Consequently,
\begin{align*}
 C_{21}\mathbf e_{x_1}
 &=B_2WA_1\mathbf e_{x_1}
  =B_2W\mathbf e_t=B_2\mathbf e_t=\mathbf e_{x_2},\\
 \mathbf e_{x_2}^TC_{21}
 &=\mathbf e_{x_2}^TB_2WA_1
  =\mathbf e_t^TWA_1=\mathbf e_t^TA_1=\mathbf e_{x_1}^T,\\
 C_{12}\mathbf e_{x_2}
 &=B_1W^{-1}A_2\mathbf e_{x_2}
  =B_1W^{-1}\mathbf e_t=B_1\mathbf e_t=\mathbf e_{x_1}.
\end{align*}
Hence
\[
 (C_{21})_{x,x_1}=\delta_{x,x_2}1_A,\qquad
 (C_{21})_{x_2,x}=\delta_{x,x_1}1_A,\qquad
 (C_{12})_{x,x_2}=\delta_{x,x_1}1_A
\]
for all $x\in X$. Finally, $R=WR^{(0)}W^{-1}$. Since $R^{(0)}$ is diagonal and
has diagonal entries $1_A,0$ at $x_1,x_2$, its four resulting
entries on $F\times F$ are exactly those in (iii) of the common
extraction calculation. Its other off-diagonal entries are zero.
The common extraction calculation now yields
\[
 Ap\subseteq S,\qquad C_{12}p=E_{x_1x_2}\in S,\qquad
 pC_{21}=E_{x_2x_1}\in S,\qquad p=E_{x_2x_2}.
\]
All modified generators were obtained by multiplication by $W$ or
$W^{-1}$, whose differences from the identity are supported on
$F\times F$. Thus Lemma~\ref{lem:finite-modifications} applies with
$x_0=x_2$. \\
\\
{\bf Path matrix units for the last two cases}\\
Suppose (LPA1) and (LPA2) hold. Let $k^{(1)}\cdots k^{(n)}$ be a path
with distinct vertices, beginning with a heavy edge
$k^{(1)}=e=e^{u,j_e}$ and ending at $z$.
Set $v_q=s(k^{(q)})$ for $1\leq q\leq n$ (so $v_1=u$), and $v_{n+1}=z$.
For $2\leq q\leq n$, every $v_q$ emits exactly one edge, and hence
$k^{(q)}=e^{v_q,1}$. Let $U^v,V^v~(v\in E^0_{\reg})$ be matrices satisfying the conditions of \S4.1. Define
\[
 \begin{aligned}
 H&=U^{v_2}_{11}\cdots U^{v_n}_{11},&
 J&=V^{v_n}_{11}\cdots V^{v_2}_{11},\\
 B_i&=U^u_{i j_e}H,&
 G_i&=JV^u_{j_e i}\quad(i=1,2).
 \end{aligned}
\]
For $n=1$ set $H=J=P^{r(e)}$.
Relations \eqref{eq:defUV} yield $HJ=P^{r(e)}$ and
\[
 B_1G_2=B_2G_1=0,\qquad B_2G_2=P^u,\qquad
 Q:=B_1G_1=P^u-\sum_{j\ne j_e}U^u_{1j}V^u_{j1}
\]
(note that $e$ is the only heavy edge emitted by $u$ since (LPA1) holds). For $j,k\ne j_e$,
relation $V^uU^u=Q^u$ implies
\[
 V^u_{j1}U^u_{1k}=\delta_{jk}P^{r(e^{u,j})}.
\]
Thus the summands in the expression for $Q$ are pairwise orthogonal
idempotents in $P^u\mathcal M P^u$, and $Q^2=Q$.
Set
\[
 A_{ij}=G_iQB_j\quad(1\leq i,j\leq2).
\]
Since
\[
 Q(B_jG_k)Q=
 \begin{cases}Q,&j=k,\\0,&j\ne k,\end{cases}
\]
we obtain
\[
 A_{ij}A_{kl}=G_iQ(B_jG_k)QB_l=\delta_{jk}A_{il}.
\]

We choose the matrices $U^v,V^v~(v\in E^0_{\reg})$ as follows. Take distinct
$x_1,x_2\in D^z$ and $t\in D^u$. For $2\leq q\leq n$, choose
distinct $x_1^{(q)},x_2^{(q)}\in D^{v_q}$, and set
$x_i^{(n+1)}=x_i$ for $i=1,2$. Prescribe
\[
 \theta^u_{1j_e}(t)=x_1^{(2)},\qquad
 \theta^u_{2j_e}(t)=x_2^{(2)},
\]
and
\[
 \theta^{v_q}_{11}(x_i^{(q)})=x_i^{(q+1)}
 \quad(2\leq q\leq n,\ i=1,2).
\]
The vertices of the path are distinct, so these prescriptions are
compatible and extend to a branching system by
\S\ref{subsec:branching-representations}.
They imply
\[
 (B_i)_{t,x_i}=(G_i)_{x_i,t}=1_A\quad(i=1,2),\qquad Q_{tt}=1_A.
\]
Initially, $B_i$ is a partial permutation matrix, $G_i=B_i^T$,
and $Q=B_1B_1^T$ is diagonal with entries $0$ or $1_A$.
Hence $A_{ij}$ is a partial permutation matrix, $A_{12}=A_{21}^T$,
and $A_{11},A_{22}$ are diagonal with entries $0$ or $1_A$.
Using the column vectors $\mathbf e_x$ from Case~2, we have
\[
 B_i\mathbf e_{x_i}=\mathbf e_t,\qquad
 G_i\mathbf e_t=\mathbf e_{x_i},\qquad
 \mathbf e_{x_i}^TG_i=\mathbf e_t^T,\qquad
 \mathbf e_t^TB_i=\mathbf e_{x_i}^T,
\]
and $Q\mathbf e_t=\mathbf e_t$, $\mathbf e_t^TQ=\mathbf e_t^T$.
Consequently,
\[
 A_{ij}\mathbf e_{x_j}=G_iQ\mathbf e_t=\mathbf e_{x_i},\qquad
 \mathbf e_{x_i}^TA_{ij}=\mathbf e_t^TQB_j=\mathbf e_{x_j}^T.
\]
These equalities also determine the remaining rows and columns
indexed by $F=\{x_1,x_2\}$. If $r\ne j$, then
\[
 A_{ij}\mathbf e_{x_r}=A_{ij}A_{rr}\mathbf e_{x_r}=0,
\]
and, if $r\ne i$, then
\[
 \mathbf e_{x_r}^TA_{ij}=\mathbf e_{x_r}^TA_{rr}A_{ij}=0.
\]
Thus, for $1\leq i,j,r\leq2$ and every $x\in X$,
\begin{equation}\label{eq:path-reserved-entries}
 (A_{ij})_{x,x_r}=\delta_{jr}\delta_{x,x_i}1_A,\qquad
 (A_{ij})_{x_r,x}=\delta_{ri}\delta_{x,x_j}1_A.
\end{equation}
In particular, the submatrix of $A_{ij}$ on the ordered coordinates
$(x_1,x_2)$ is the matrix unit $E_{ij}$, and
$(A_{ij})_{xy}=0$ whenever exactly one of $x,y$ belongs to $F$.

We will modify the matrices at the final edge
$k^{(n)}=e^{v_n,j}$, where $j=j_e$ if $n=1$ and $j=1$ otherwise.
Let $W\in\mathcal M$ be invertible, with
$W_{xy}=\delta_{xy}1_A$ whenever $(x,y)\notin D^z\times D^z$.
For $1\leq i\leq w(k^{(n)})$, replace
\[
 U^{v_n}_{ij}\longmapsto U^{v_n}_{ij}W^{-1},\qquad
 V^{v_n}_{ji}\longmapsto WV^{v_n}_{ji},
\]
leaving all other matrices unchanged. If $W-1$ and $W^{-1}-1$ are supported on $F\times F$, these are
modifications of the kind allowed in Lemma~\ref{lem:finite-modifications}. We leave it to the reader to check that the modified family of matrices still defines a representation.
Set
\[
 B_i'=B_iW^{-1},\qquad G_i'=WG_i,\qquad
 Q'=B_1'G_1'=Q,\qquad A_{ij}'=WA_{ij}W^{-1}.
\]
These are the matrices obtained from the modified representation by the definitions above.

\par\medskip
\Needspace{6\baselineskip}
\noindent\textbf{Case 3}\par\nopagebreak\smallskip
\noindent
Suppose (LPA1) and (LPA2) hold and (LPA3) fails. Then we can choose heavy edges $e,f$ that are not in line, such that $T(r(e))\cap T(r(f))\neq \emptyset$. Choose paths $\gamma,\eta$ with minimum total length such that $\gamma$ starts with $e$, $\eta$ starts with $f$, and both $\gamma$ and $\eta$ end at a vertex $z\in T(r(e))\cap T(r(f))$. The paths $\gamma,\eta$ meet only at $z$, since an earlier common vertex would yield shorter paths. Neither initial source occurs on the other path,
since the edges are not in line. A repeated noninitial vertex would
also yield a shorter path. If either path returned to its initial
source, (LPA2) would force all subsequent edges to stay on the
resulting cycle. Then $z$ would lie on that cycle, so the other path
could continue from $z$ to the first source, contrary to the fact
that $e,f$ are not in line. Thus both paths have distinct vertices,
and every source after the first emits exactly one edge.

Choose distinct $x_1,x_2\in D^z$ and set $F=\{x_1,x_2\}$.
Prescribe the branching maps along each path as in the preceding
construction, using the same terminal coordinates $x_1,x_2$.
The two paths have disjoint sets of sources, so these prescriptions
are compatible. Extend them to a branching system by
\S\ref{subsec:branching-representations}.
Denote the two systems of path matrix units by
$\alpha_{ij}$ and $\beta_{ij}$, respectively.
Both satisfy \eqref{eq:path-reserved-entries} for the same $x_1,x_2$.

Let $W\in\mathcal M$ be defined by
\[
 \begin{pmatrix}W_{x_1,x_1}&W_{x_1,x_2}\\
 W_{x_2,x_1}&W_{x_2,x_2}\end{pmatrix}=M,
 \qquad W_{xy}=\delta_{xy}1_A
 \quad\text{if }(x,y)\notin F\times F.
\]
Its inverse is defined in the same way, but using $M^{-1}$ instead of $M$.
Write the final edge of $\eta$ as $k=e^{q,j_k}$;
here $j_k=1$ unless $\eta$ consists only of $f$.
Replace
\[
 U^q_{i j_k}\longmapsto U^q_{i j_k}W^{-1},\qquad
 V^q_{j_k i}\longmapsto WV^q_{j_k i}
 \quad(1\leq i\leq w(k)),
\]
leaving all other matrices unchanged.
The modified family of matrices defines a representation
$\rho:L\to\mathcal M$, whose image we denote by $S$.
Since the paths have disjoint sets of sources, the first system of path matrix units $\alpha_{ij}$ is
unchanged, while the second becomes
$\beta_{ij}'=W\beta_{ij}W^{-1}$. In particular,
$\alpha_{ij},\beta_{ij}'\in S$.

Set $C_{ij}=\alpha_{ij}$ and $R=\beta_{11}'$.
The matrix $C_{22}$ is diagonal with entries $0$ or $1_A$, and
\[
 (C_{22})_{x_1,x_1}=0,\qquad (C_{22})_{x_2,x_2}=1_A,
 \qquad C_{21}C_{22}=0.
\]
Moreover, \eqref{eq:path-reserved-entries} yields
\[
 (C_{21})_{x,x_1}=\delta_{x,x_2}1_A,\qquad
 (C_{21})_{x_2,x}=\delta_{x,x_1}1_A,\qquad
 (C_{12})_{x,x_2}=\delta_{x,x_1}1_A
 \quad(x\in X).
\]
Since $\beta_{11}$ is diagonal with diagonal entries $1_A,0$ at
$x_1,x_2$, the four entries of $R=W\beta_{11}W^{-1}$ on
$F\times F$ are
\[
 \begin{pmatrix}R_{x_1,x_1}&R_{x_1,x_2}\\
 R_{x_2,x_1}&R_{x_2,x_2}\end{pmatrix}
 =\begin{pmatrix}b(1-a)b^{-1}&b\\a(1-a)b^{-1}&a\end{pmatrix},
\]
and all other off-diagonal entries of $R$ are zero.
The common extraction calculation therefore yields
\[
 Ap\subseteq S,\qquad \alpha_{12}p=E_{x_1x_2}\in S,\qquad
 p\alpha_{21}=E_{x_2x_1}\in S
\]
where $p=E_{x_2x_2}$.
Since $W-1$ and $W^{-1}-1$ are supported on $F\times F$,
Lemma~\ref{lem:finite-modifications} applies with $x_0=x_2$.
\par\medskip
\Needspace{6\baselineskip}
\noindent\textbf{Case 4}\par\nopagebreak\smallskip
\noindent
Suppose (LPA1)--(LPA3) hold and (LPA4) fails. Then we can choose a heavy edge
$e$ and a cycle reachable from $r(e)$ which does not contain $e$.
By (LPA2), every vertex reachable from $r(e)$ emits at most one edge.
Choose the path beginning with $e$ and ending at its first vertex
$z$ on the cycle. It has no repeated noninitial vertex: after such a
repetition, its edges would stay on a cycle and could not first reach
$z$ later. Its initial source also lies outside both the cycle and
the rest of the path. If it lay on the cycle, (LPA2) would force $e$
to be the cycle edge emitted there, contrary to our choice of cycle.
If the path returned to its initial source before reaching $z$,
its uniquely determined continuation would repeat that closed path
and could not first reach $z$ later.
Thus the incoming path has distinct vertices, and every source after
the first emits exactly one edge.

Choose distinct $x_1,x_2\in D^z$ and set $F=\{x_1,x_2\}$.
Prescribe the branching maps along the incoming path as in the
path construction above. Write the cycle starting at $z$ as
$k^{(1)}\cdots k^{(n)}$, and set $v_q=s(k^{(q)})$ for
$1\leq q\leq n$ and $v_{n+1}=z$. Then
$k^{(q)}=e^{v_q,1}$ and $v_1=z=v_{n+1}$.
For $2\leq q\leq n$, choose distinct
$x_1^{(q)},x_2^{(q)}\in D^{v_q}$, and set
$x_i^{(1)}=x_i^{(n+1)}=x_i$ for $i=1,2$. Prescribe
\[
 \theta^{v_q}_{11}(x_i^{(q)})=x_i^{(q+1)}
 \quad(1\leq q\leq n,\ i=1,2).
\]
For $n=1$, these are simply
$\theta^z_{11}(x_i)=x_i$ for $i=1,2$.
The incoming path has no source on the cycle, so all prescriptions
are compatible. Extend them to a branching system by
\S\ref{subsec:branching-representations}.
Let $A_{ij}$ be the incoming-path matrix units. They satisfy
\eqref{eq:path-reserved-entries}.

Define 
\[
 Y^0=U^{v_1}_{11}\cdots U^{v_n}_{11},\qquad
 Z_0=V^{v_n}_{11}\cdots V^{v_1}_{11}.
\]
The prescribed values imply $(Y^0)_{x_i,x_i}=1_A$ for $i=1,2$.
Since $Y^0$ is a partial permutation matrix and $Z_0=(Y^0)^T$, we have
\[
 Y^0\mathbf e_{x_i}=Z_0\mathbf e_{x_i}=\mathbf e_{x_i},\qquad
 \mathbf e_{x_i}^TY^0=\mathbf e_{x_i}^TZ_0=\mathbf e_{x_i}^T
 \quad(i=1,2).
\]
Equivalently, for every $x\in X$,
\[
 (Y^0)_{x,x_i}=(Z_0)_{x,x_i}=\delta_{x,x_i}1_A,\qquad
 (Y^0)_{x_i,x}=(Z_0)_{x_i,x}=\delta_{x,x_i}1_A.
\]

Define $W$ as in Case~3, with submatrix $M$ on the ordered
coordinates $(x_1,x_2)$ and the identity outside $F\times F$.
At the final cycle edge replace
\[
 U^{v_n}_{i1}\longmapsto U^{v_n}_{i1}W,\qquad
 V^{v_n}_{1i}\longmapsto W^{-1}V^{v_n}_{1i}
 \quad(1\leq i\leq w(k^{(n)})),
\]
leaving all other matrices unchanged. The modified family of matrices defines a representation $\rho:L\to\mathcal M$, whose image we denote by $S$.
The incoming-path matrices $A_{ij}$ are unchanged.
The products of the modified cycle matrices are
\[
 Y=Y^0W\in S,\qquad Z=W^{-1}Z_0\in S.
\]

Set $R=YA_{11}Z$ and $R_0=Y^0A_{11}Z_0$.
The matrix $R_0$ is diagonal with entries $0$ or $1_A$, because
$A_{11}$ is diagonal and $Z_0=(Y^0)^T$.
The displayed rows and columns of $Y^0$ and $Z_0$ imply
\[
 (R_0)_{x_1,x_1}=1_A,\qquad (R_0)_{x_2,x_2}=0,
\]
and
\[
 Y^0(W-1)=(W-1)Y^0=W-1,\qquad
 Z_0(W-1)=(W-1)Z_0=W-1.
\]
Thus $Y^0$ and $Z_0$ commute with $W$ and $W^{-1}$, and
\[
 R=Y^0WA_{11}W^{-1}Z_0=WR_0W^{-1}.
\]
Consequently,
\[
 \begin{pmatrix}R_{x_1,x_1}&R_{x_1,x_2}\\
 R_{x_2,x_1}&R_{x_2,x_2}\end{pmatrix}
 =\begin{pmatrix}b(1-a)b^{-1}&b\\a(1-a)b^{-1}&a\end{pmatrix},
\]
and all other off-diagonal entries of $R$ are zero.
The matrices $C_{ij}=A_{ij}$ satisfy the remaining hypotheses of
the common extraction calculation by their matrix-unit relations
and \eqref{eq:path-reserved-entries}, exactly as in Case~3.
It follows that
\[
 Ap\subseteq S,\qquad A_{12}p=E_{x_1x_2}\in S,\qquad
 pA_{21}=E_{x_2x_1}\in S
\]
where $p=E_{x_2x_2}$. Since $W-1$ and $W^{-1}-1$ are supported on $F\times F$,
Lemma~\ref{lem:finite-modifications} applies with $x_0=x_2$.

\medskip
In each case, Lemma~\ref{lem:finite-modifications} yields a nonempty subset \(\Omega\subseteq X\) such that \(\mathbb M^{\fin}_{\Omega}(A)\) is an ideal of \(S=\rho(L)\). This proves the proposition.
\end{proof}

\section{The main theorem}
\begin{theorem}\label{thm:main}
Let $(E,w)$ be a row-finite weighted graph. The following are equivalent
\begin{enumerate}[(T1)]
\medskip
\item $(E,w)$ fails Condition (LPA).
\medskip
\item $\Id(L_K(E,w))$ is nondistributive.
\medskip
\item Ideal multiplication in $L_K(E,w)$ is not commutative.
\medskip
\item For every countably generated unital $K$-algebra $B$ there exist ideals $Y\subseteq Z$ of $L_K(E,w)$ and a lattice isomorphism
\[
 \Phi:\Id(B)\longrightarrow[Y,Z]
\]
which also satisfies, for all $I,J\lhd B$,
\begin{equation}\label{eq:relativeproduct}
 \Phi(IJ)=\Phi(I)\Phi(J)+Y.
\end{equation}
\end{enumerate}
\end{theorem}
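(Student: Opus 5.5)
We prove the cycle (T1)$\Rightarrow$(T4)$\Rightarrow$(T2)$\wedge$(T3), and then (T2)$\Rightarrow$(T1) and (T3)$\Rightarrow$(T1) by contraposition.

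\emph{(T1)$\Rightarrow$(T4).} If $B=0$, then $\Id(B)$ is a single point, and we may take $Y=Z=L_K(E,w)$. So let $B$ be nonzero and countably generated. By Lemma~\ref{lem:uniform-coefficients} there are $a,b\in\mathbb M^{\rcf}_{\N}(B)$ with $b^4=1$ such that the subalgebra $A$ generated by $a$ and $b$ contains $\mathbb M^{\fin}_{\N}(B)$. This $A$ is a nonzero unital algebra generated by $a,b$, so Proposition~\ref{prop:matrix-representation} gives $\rho:L\to\mathbb M^{\rcf}_X(A)$ and a nonempty $\Omega\subseteq X$ with $\mathbb M^{\fin}_\Omega(A)\lhd\rho(L)$.

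Set $J=\rho^{-1}(\mathbb M^{\fin}_\Omega(A))$. Then $\rho$ restricts to a surjection $J\to\mathbb M^{\fin}_\Omega(A)\subseteq\mathbb M^{\rcf}_\Omega(A)\subseteq\mathbb M^{\rcf}_{\Omega}(\mathbb M^{\rcf}_{\N}(B))$. Regard this as a map into $\mathbb M^{\rcf}_{\Lambda}(B)$ with $\Lambda=\Omega\times\N$; a finite matrix over row- and column-finite matrices is still row- and column-finite. The image contains $\mathbb M^{\fin}_\Omega(\mathbb M^{\fin}_{\N}(B))=\mathbb M^{\fin}_\Lambda(B)$.

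To obtain a homomorphism defined on all of $L$, compose $\rho$ with the corner compression to $\Omega\times\Omega$. This is multiplicative on $\rho(L)$ because, by the proof of Lemma~\ref{lem:orbit}, every matrix in $\rho(L)$ has no entries between $\Omega$ and $X\setminus\Omega$. Call the result $\pi:L\to\mathbb M^{\rcf}_\Lambda(B)$. Its image contains $\mathbb M^{\fin}_\Lambda(B)$, so Lemma~\ref{prop:matrix-transfer} gives $\Phi$ with the lattice property and the product formula \eqref{eq:relativeproduct}.

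The main obstacle I expect here is exactly this bookkeeping: verifying the block-invariance of $\Omega$ and the inclusion $\mathbb M^{\fin}_\Lambda(B)\subseteq\pi(L)$ via the relabelling $\mathbb M_\Omega(\mathbb M_{\N}(B))\cong\mathbb M_{\Omega\times\N}(B)$.

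\emph{(T4)$\Rightarrow$(T2),(T3).} Take $B=\mathbb M_2(K)\times\mathbb M_2(K)$, or more simply $B=K\times K\times K$. Neither works, since each has a distributive ideal lattice. Instead take $B=K[x,y]/(x,y)^2$, which is finitely generated and unital. The $K$-subspaces between $(x,y)^2=0$ and $(x,y)$ are ideals, and three distinct lines in the plane $Kx\oplus Ky$ form an $M_3$ sublattice, so $\Id(B)$ is nondistributive. Since $[Y,Z]$ is a sublattice of $\Id(L)$ and $\Phi$ preserves joins and meets, $\Id(L)$ is nondistributive.

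For (T3), take $B=\begin{pmatrix}K&K\\0&K\end{pmatrix}$, the upper triangular matrices. Let $I$ be the ideal of matrices with zero second row and $J$ the ideal of matrices with zero first column. Then $IJ=\begin{pmatrix}0&K\\0&0\end{pmatrix}$ while $JI=0$. Injectivity of $\Phi$ gives $\Phi(IJ)\neq\Phi(JI)$. By \eqref{eq:relativeproduct}, $\Phi(I)\Phi(J)+Y\neq\Phi(J)\Phi(I)+Y$, so $\Phi(I)\Phi(J)\neq\Phi(J)\Phi(I)$.

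\emph{(T2)$\Rightarrow$(T1) and (T3)$\Rightarrow$(T1).} Suppose (T1) fails, so Condition (LPA) holds. By \cite[Theorems 1 and 2]{Preusser}, $L_K(E,w)$ is ring isomorphic to an ordinary Leavitt path algebra $L_{K'}(F)$ over a field $K'$. Since $L_K(E,w)$ has local units, two-sided ideals of it as a $K$-algebra coincide with ring ideals: for an ideal $I$, $\lambda I=\lambda\sum uIu\subseteq I$ via the local units. So the ideal lattice and ideal multiplication are ring-theoretic invariants. By \cite{Rangaswamy}, the ideal lattice of an ordinary Leavitt path algebra is distributive and its ideal multiplication is commutative. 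Hence both (T2) and (T3) fail.
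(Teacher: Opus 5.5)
Your proof is correct and follows the paper's route: for (T1)$\Rightarrow$(T4) you use Lemma~\ref{lem:uniform-coefficients}, Proposition~\ref{prop:matrix-representation}, compression to $\Lambda=\Omega\times\N$ and Lemma~\ref{prop:matrix-transfer}; for (T4)$\Rightarrow$(T2),(T3) you use explicit small test algebras; and for the converses you use \cite{Preusser} together with \cite{Rangaswamy}. The differences are cosmetic: you use two test algebras where the paper uses the single algebra $B_0$, you treat $B=0$ separately (the paper tacitly skips this case), and you take the block property of $\rho(L)$ from the proof of Lemma~\ref{lem:orbit}, whereas the paper reads it off directly from $sE_{\omega\omega},E_{\omega\omega}s\in\mathbb M^{\fin}_{\Omega}(A)$.
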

\begin{proof}
Let $L=L_K(E,w)$. We first prove (T1)$\Rightarrow$(T4). Assume that
$(E,w)$ fails Condition (LPA), and let $B$ be a nonzero countably
generated unital $K$-algebra. By Lemma~\ref{lem:uniform-coefficients},
there are $a,b\in\mathbb M^{\rcf}_{\N}(B)$ such that $b^4=1$
and the subalgebra $A$ of $\mathbb M^{\rcf}_{\N}(B)$ generated by $a,b$ contains $\mathbb M^{\fin}_{\N}(B)$. In particular, $A$ is nonzero and unital, since $1=b^4\in A$.
By Proposition~\ref{prop:matrix-representation}, there exist a representation \(\rho:L\to\mathbb M^{\rcf}_X(A)\) and a nonempty subset \(\Omega\subseteq X\) such that
\[
Q:=\mathbb M^{\fin}_{\Omega}(A)\lhd S:=\rho(L).
\] For $s\in S$ and $\omega\in\Omega$, both $sE_{\omega\omega}$ and
$E_{\omega\omega}s$ belong to $Q$. Since $Q=\mathbb M^{\fin}_{\Omega}(A)$, it follows that
\begin{equation}\label{eq:sxomega}
s_{x\omega}=s_{\omega x}=0 \quad\forall s\in S, ~\omega \in \Omega, ~x\in X\setminus\Omega.
\end{equation}

Set $\Lambda=\Omega\times\N$ and define
\[
 \pi:L\longrightarrow\mathbb M^{\rcf}_{\Lambda}(B),\qquad
 \pi(\ell)_{(\omega,m),(\eta,n)}
   =\bigl(\rho(\ell)_{\omega\eta}\bigr)_{mn}.
\]
Clearly $\pi$ is $K$-linear. Moreover, in view of \eqref{eq:sxomega} we have
\[
 \begin{aligned}
 \pi(\ell\ell')_{(\omega,m),(\eta,n)}
 =\sum_{\zeta\in\Omega}\sum_{k\in\N}
    \bigl(\rho(\ell)_{\omega\zeta}\bigr)_{mk}
    \bigl(\rho(\ell')_{\zeta\eta}\bigr)_{kn}=\bigl(\pi(\ell)\pi(\ell')\bigr)_{(\omega,m),(\eta,n)}
 \end{aligned}
\]
for every $\ell,\ell'\in L$ and $(\omega,m),(\eta,n)\in\Lambda$. Thus $\pi$ is an algebra homomorphism. Since $Q$ contains $E_{\omega\eta}(E_{mn}(c))$ for every
$\omega,\eta\in\Omega$, $m,n\in\N$ and $c\in B$, the image of $\pi$ contains
every elementary matrix $E_{(\omega,m),(\eta,n)}(c)$. Consequently,
\begin{equation*}\label{eq:finite-matrix-ideal}
\mathbb M^{\fin}_{\Lambda}(B)\subseteq\pi(L).
\end{equation*}
Lemma~\ref{prop:matrix-transfer} now yields the required lattice isomorphism and the
identity $\Phi(IJ)=\Phi(I)\Phi(J)+Y$. 

\medskip
\noindent\emph{The remaining implications.}
Consider the finite-dimensional unital algebra
\[
 B_0=K1_3+KE_{12}+KE_{23}+KE_{13}\subseteq\mathbb M_3(K).
\]
The subspaces
\[
 I=KE_{12}+KE_{13},\qquad J=KE_{23}+KE_{13},\qquad
 W=K(E_{12}+E_{23})+KE_{13}
\]
are two-sided ideals of $B_0$. One checks easily that
\[
 \begin{gathered}
 I\cap(J+W)=I\ne KE_{13}=(I\cap J)+(I\cap W),\\
 IJ=KE_{13},\qquad JI=0.
 \end{gathered}
\]
Hence $\Id(B_0)$ is nondistributive and ideal multiplication in $B_0$ is not
commutative.

Assume (T4) and apply it to $B_0$. The resulting lattice isomorphism implies
that $\Id(L)$ is nondistributive, since $\Id(B_0)$ is nondistributive. Thus (T4)$\Rightarrow$(T2).
To prove (T4)$\Rightarrow$(T3), assume that ideal
multiplication in $L$ were commutative. Formula
\eqref{eq:relativeproduct} would then yield
\[
 \Phi(IJ)=\Phi(I)\Phi(J)+Y
          =\Phi(J)\Phi(I)+Y=\Phi(JI).
\]
The injectivity of $\Phi$ would force $IJ=JI$, contrary to the
calculation above.

Finally, if Condition (LPA) holds, then $L$ is isomorphic to an ordinary
Leavitt path algebra by \cite[Theorem~1]{Preusser}. It follows from \cite[Theorems~3.4 and~4.3]{Rangaswamy} that $\Id(L)$
is distributive and ideal multiplication is commutative.
Hence (T2)$\Rightarrow$(T1) and
(T3)$\Rightarrow$(T1). Together with the implications already proved,
this establishes that (T1)--(T4) are equivalent.
\end{proof}

\section{Cardinalities of ideal sets}
A prime ideal of a ring $R$ is proper and prime with respect to products of two-sided
ideals. A left (respectively right) primitive ideal is an annihilator of a simple
unital left (respectively right) $R$-module. We write $\Spec(R)$,
$\Prim_{\mathrm{left}}(R)$ and $\Prim_{\mathrm{right}}(R)$ for the
corresponding sets of ideals.

\begin{corollary}\label{cor:cardinality}
Let $(E,w)$ be a row-finite weighted graph such that $E^0$ and $E^1$ are countable and Condition (LPA) fails. Set $L=L_K(E,w)$. Then
\[
 |\Id(L)|=|\Spec(L)|=|\Prim_{\mathrm{left}}(L)|
 =|\Prim_{\mathrm{right}}(L)|=|K|^{\aleph_0}.
\]
There are also exactly $|K|^{\aleph_0}$ ideals which are both left and right primitive.
\end{corollary}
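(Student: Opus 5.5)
The plan is an upper bound from countability of $L$ and a matching lower bound from Theorem~\ref{thm:main}, applied to a suitably chosen countably generated algebra $B$.

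\emph{Upper bound.} Since $E^0$ and $E^1$ are countable, $L$ is a $K$-vector space of countable dimension. Hence $|L|\le\max(|K|,\aleph_0)\le|K|^{\aleph_0}$. Every ideal is generated by a countable subset of $L$: take a countable spanning set of the ideal, which exists because any subspace of a countable-dimensional space has countable dimension. So
\[
|\Id(L)|\le |L|^{\aleph_0}\le |K|^{\aleph_0}.
\]
All the other sets in the statement are subsets of $\Id(L)$.

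\emph{Lower bound.} It suffices to produce $|K|^{\aleph_0}$ ideals of $L$ that are prime, left primitive and right primitive. I would apply Theorem~\ref{thm:main}(T4) to a countably generated unital $B$ that has $|K|^{\aleph_0}$ ideals which are simultaneously left and right primitive. The isomorphism $\Phi:\Id(B)\to[Y,Z]$ should then send such ideals to ideals of $L$ of the same kind.

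A natural choice for $B$ is the free algebra $K\langle x,y\rangle$ or a polynomial ring. For each sequence $c=(c_n)\in K^{\N}$ I would build a simple module $M_c$ on which $B$ acts with annihilator $P_c$, with $P_c\neq P_{c'}$ for $c\ne c'$. For example, take $M_c$ with basis $(m_n)_{n\in\N}$, let $x$ act as a shift with $x m_n=m_{n+1}$, and let $y$ act diagonally by $c_n$. Choosing $c$ with pairwise distinct entries and suitably generic should make $M_c$ simple. Distinctness of the annihilators can be read off from matrix entries. The same construction, applied to the opposite algebra, handles right modules; alternatively one uses a $B$ with an anti-involution. An easier alternative is to take $B=\mathbb M^{\rcf}$-style algebras, but I prefer to transfer directly from $L$.

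\emph{Transfer.} Rather than pushing primitivity through the lattice isomorphism abstractly, I would use $\pi:L\to\mathbb M^{\rcf}_\Lambda(B)$ from the proof of (T1)$\Rightarrow$(T4). Given a simple left $B$-module $M$, form the module $N=\bigoplus_{\lambda\in\Lambda}M$ of finitely supported column vectors. This $N$ is a module over $\mathbb M^{\rcf}_\Lambda(B)$, hence over $L$ via $\pi$. It is simple and unital over $L$ because $\pi(L)\supseteq\mathbb M^{\fin}_\Lambda(B)$, which acts irreducibly on $N$. Its annihilator is
\[
\{\ell : \pi(\ell)\in\mathbb M^{\rcf}_\Lambda(\operatorname{ann}M)\}.
\]
These annihilators are distinct for distinct $\operatorname{ann}M$, since the finite matrices $E_{\lambda\mu}(c)$ lie in the image. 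Primitive ideals are prime, which gives the count for $\Spec$ as well.

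\emph{Main obstacle.} The hardest step will be the construction, inside a countably generated algebra, of continuum-many (in fact $|K|^{\aleph_0}$-many) distinct ideals that are both left and right primitive. If $|K|^{\aleph_0}$-many primitive ideals in $B$ prove delicate, I would fall back on this: $\pi(L)$ contains $\mathbb M^{\fin}_\Lambda(B)$ for the algebra $B=A$ itself. I would then directly exhibit simple $L$-modules parametrized by sequences in $K^{\N}$, namely rows or columns of $\mathbb M^{\fin}$ twisted by diagonal data, and check their annihilators separate the parameters.
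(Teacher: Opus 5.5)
Your upper bound and your transfer step are sound. For a simple unital left $B$-module $M$, the finitely supported columns $M^{(\Lambda)}$ form a simple unital $L$-module via $\pi$. Its annihilator is $\pi^{-1}(\mathbb M^{\rcf}_\Lambda(\operatorname{ann}M))$, and distinct $\operatorname{ann}M$ give distinct annihilators. Rows and right modules work the same way.

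\textbf{The gap is in the input to this transfer.}

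\emph{The proposed module is not simple.} With $x m_n=m_{n+1}$ and $y$ acting diagonally, the span of $\{m_n : n\ge 1\}$ is a proper nonzero submodule for every $c$. Over $K[x,y]$ it is not even a module, since a shift does not commute with a diagonal operator having distinct entries.

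\emph{Left and right primitivity are not obtained for the same ideals.} Even with a correct family of left primitive ideals of $B$, applying the construction to the opposite algebra, or using an anti-involution, produces right primitive ideals that need not coincide with the left ones. So you do not get $|K|^{\aleph_0}$ ideals that are simultaneously left and right primitive. That is the last assertion of the corollary, and also the single family you said would suffice.

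\emph{A polynomial ring in finitely many variables cannot work.} It is Noetherian, so it has at most $\max(|K|,\aleph_0)$ ideals. This is strictly less than $|K|^{\aleph_0}$, for example when $K$ is countable.

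\textbf{The missing idea} is to take the commutative, countably generated algebra $B=K[x_0,x_1,\ldots]$. For $t\in K^{\N}$, the evaluation kernel $\mathfrak m_t$ is the annihilator of the one-dimensional module $K_t$ on either side, because $B$ is commutative. Distinct $t$ give distinct $\mathfrak m_t$: if $s_n\ne t_n$, the element $x_n-t_n$ lies in $\mathfrak m_t$ but not in $\mathfrak m_s$. Feeding $K_t$ into your transfer, once as columns and once as rows, yields the same ideal $\pi^{-1}(\mathbb M^{\rcf}_\Lambda(\mathfrak m_t))$ of $L$ as the annihilator of a simple left module and of a simple right module. That ideal is therefore left and right primitive, and hence prime.

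This is exactly the paper's argument. There it is phrased as $\mathcal P_t=\pi^{-1}(\ker\varepsilon_t)$, where $\varepsilon_t$ is entrywise evaluation and $\ker\varepsilon_t=S\cap\mathbb M^{\rcf}_\Lambda(\mathfrak m_t)$. The image of $\varepsilon_t$ contains $\mathbb M^{\fin}_\Lambda(K)$, so both column and row vectors over $K$ are simple faithful modules with the same kernel.
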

\begin{proof}
For $B=K[x_0,x_1,\ldots]$ let 
\[\pi:L\longrightarrow\mathbb M^{\rcf}_{\Lambda}(B)\]
be the homomorphism constructed in the proof of Theorem~\ref{thm:main}. Set
\[
 S=\pi(L),\qquad H=\mathbb M^{\fin}_{\Lambda}(B)\lhd S\subseteq \mathbb M^{\rcf}_{\Lambda}(B).
\]
For each tuple $t=(t_n)_{n\in\N}\in K^{\N}$, let
$\operatorname{ev}_t:B\to K$ be the evaluation homomorphism determined
by $\operatorname{ev}_t(x_n)=t_n$. Set
\[
 \mathfrak m_t=\ker\operatorname{ev}_t,\qquad
 H_t=\mathbb M^{\fin}_{\Lambda}(\mathfrak m_t).
\]
Each $H_t$ is an ideal of $S$, since $\mathfrak m_t$ is an ideal of $B$.
Entrywise evaluation defines an algebra homomorphism
\[
 \varepsilon_t:S\longrightarrow\mathbb M^{\rcf}_{\Lambda}(K),\qquad
 \varepsilon_t((b_{\lambda\mu}))
   =(\operatorname{ev}_t(b_{\lambda\mu})).
\]
Let $R_t=\varepsilon_t(S)$. Clearly
\[\mathbb M^{\fin}_{\Lambda}(K)=\varepsilon_t(H)\subseteq \varepsilon_t(S)=R_t\subseteq \mathbb M^{\rcf}_{\Lambda}(K).\]

\medskip
\noindent\emph{The ideals $ \mathcal P_t\lhd L$.}
Let $V=K^{(\Lambda)}$ be the vector space of finite-support columns,
and let $V'$ be the vector space of finite-support rows with the same
index set. Ordinary matrix multiplication makes them left and right
$R_t$-modules. Both modules are unital:
the finite diagonal projection on the support of a vector fixes it on
the appropriate side.

The modules $V$ and $V'$ are simple and faithful. To prove that $V$ is
simple as a left module, let $v\in V$ be nonzero and choose
$\mu\in\Lambda$ with $v_\mu\ne0$.
For every $\lambda\in\Lambda$, multiplication by
$E_{\lambda\mu}(v_\mu^{-1})$ sends $v$ to the standard column with $1$ at 
position $\lambda$ and zeros elsewhere. Thus every nonzero submodule contains all standard
columns. The proof for rows uses
$v'E_{\mu\lambda}((v'_\mu)^{-1})$ in the same way.
A matrix in $R_t$ annihilating every standard column has all columns
zero, and one annihilating every standard row has all rows zero.
This proves faithfulness on both sides.

Define
\begin{equation}\label{eq:primitive-annihilator}
 P_t=\ker\varepsilon_t.
\end{equation}
It follows immediately that
\begin{equation}\label{eq:primitive-intersection}
 P_t\cap H=H_t.
\end{equation}
Since $H/H_t$ is nonzero, this also shows that $P_t$ is proper.

The two modules $V$ and $V'$, regarded as $L$-modules via
$\varepsilon_t\circ\pi:L\to R_t$, are simple and unital, with common
annihilator
\[
 \mathcal P_t:=\pi^{-1}(P_t).
\]
Thus $\mathcal P_t$ is both left and right primitive. It is also prime.
Indeed, an algebra $R$ with a faithful simple left module $M$ is
prime (i.e. the zero ideal of $R$ is prime): for any nonzero ideal $J\lhd R$, the submodule $JM$ is nonzero
and hence equals $M$. If $IJ=0$ for another ideal $I$, then
\[
 IM=I(JM)=(IJ)M=0,
\]
so faithfulness implies $I=0$. Apply this to $S/P_t$, which acts
faithfully and simply on $V$, and pull back along $L\to S/P_t$.

\medskip
\noindent\emph{$|K|^{\aleph_0}$ is a lower bound for the cardinalities.}
Let $s,t\in K^{\N}$ be distinct, and choose $n$ with $s_n\ne t_n$.
Fix $\lambda\in\Lambda$. The matrix
\[
 h=E_{\lambda\lambda}(x_n-t_n)\in H
\]
belongs to $H_t$ but not to $H_s$, since its coefficient evaluates to
$0$ at $t$ and to $s_n-t_n\ne0$ at $s$.
By \eqref{eq:primitive-intersection}, $h\in P_t$ and $h\notin P_s$.
Choose $\ell\in L$ with $\pi(\ell)=h$. Then
\[
 \ell\in\mathcal P_t,\qquad \ell\notin\mathcal P_s.
\]
Thus $t\mapsto\mathcal P_t$ is injective. It follows that 
$|K^{\N}|=|K|^{\aleph_0}$ is a lower bound for the cardinality of each of the ideal sets under consideration.

\medskip
\noindent\emph{$|K|^{\aleph_0}$ is an upper bound for the cardinalities.}
Since $E$ is countable and every edge has finite weight, the defining
generators of $L$ form a countable set. There are countably many finite words in these generators, and these words span
$L$ as a $K$-vector space. Consequently
\[
 \dim_K L\leq\aleph_0,\qquad
 |L|\leq\kappa:=\max(|K|,\aleph_0).
\]
Every ideal is a vector subspace of $L$, so it has a countable
basis and can be spanned by a sequence of elements of $L$, padding
with zeros if necessary. The map sending a sequence to its linear
span therefore has every ideal among its values. Hence
\[
 |\Id(L)|\leq |L|^{\aleph_0}
 \leq\kappa^{\aleph_0}=|K|^{\aleph_0}.
\]
If $K$ is infinite, the last equality follows from $\kappa=|K|$.
If $K$ is finite, then $2\leq|K|<\aleph_0$ and both powers equal
$2^{\aleph_0}$. Since all the other sets of ideals under consideration are
subsets of $\Id(L)$, the assertion of the corollary follows.
\end{proof}

\begin{remark}
The construction of the distinct ideals $\mathcal P_t$ does not use
countability of $E$. Thus every Leavitt path algebra of a row-finite weighted graph failing
Condition (LPA) has at least $|K|^{\aleph_0}$ ideals of each kind in Corollary~\ref{cor:cardinality}.
\end{remark}

\section*[Declaration of generative AI use]{Declaration of generative AI and AI-assisted technologies in the manuscript preparation process}
During the preparation of this work the author used ChatGPT to discuss mathematical ideas and check proofs. After using this tool, the author reviewed and edited the content as needed and takes full responsibility for the content of the published article.


\begin{thebibliography}{9}


\bibitem{Ara}
P. Ara, \emph{Leavitt path algebras of weighted and separated graphs}, J. Aust. Math. Soc. \textbf{115} (2023), 1--25.

\bibitem{AAM} G. Abrams, P. Ara, M. Siles Molina, \textit{Leavitt path algebras}, Lecture Notes in Mathematics {\bf 2191}, Springer, 2017.

\bibitem{DamalerioHazratNam}
R. J. Damalerio, R. Hazrat and T. G. Nam, \emph{The graded Grothendieck group $K_0^{\mathrm{gr}}$ is full for weighted Leavitt path algebras}, preprint (2026), \href{https://arxiv.org/abs/2606.15075v1}{arXiv:2606.15075v1}.

\bibitem{HazratNam}
R. Hazrat and T. G. Nam, \emph{On structural connections between sandpile monoids and weighted Leavitt path algebras}, J. Algebra \textbf{678} (2025), 543--569.

\bibitem{Preusser}
R. Preusser, \emph{Weighted Leavitt path algebras that are isomorphic to unweighted Leavitt path algebras}, Algebr. Represent. Theory \textbf{24} (2021), 403--423.

\bibitem{Survey}
R. Preusser, \emph{Weighted Leavitt path algebras---an overview}, Zap. Nauchn. Sem. POMI \textbf{531} (2024), 157--237.

\bibitem{Rangaswamy}
K. M. Rangaswamy, \emph{The multiplicative ideal theory of Leavitt path algebras}, J. Algebra \textbf{487} (2017), 173--199.

\end{thebibliography}
\end{document}